\newtheorem{theorem}{Theorem}[section]
\newtheorem{lemma}[theorem]{Lemma}
\newtheorem{cor}[theorem]{Corollary}
\theoremstyle{definition}
\theoremstyle{remark}
\newtheorem{remark}[theorem]{\bf{Remark}}
\numberwithin{equation}{section}
\begin{document}

\title [Improved inequalities for numerical radius]{Improved inequalities for the numerical radius via Cartesian decomposition}

\author[P. Bhunia, S. Jana, M. S. Moslehian and K. Paul] {Pintu Bhunia$^1$, Suvendu Jana$^2$, Mohammad Sal Moslehian$^3$, \MakeLowercase{and} Kallol Paul$^1$}

\address{$^1$ Department of Mathematics, Jadavpur University, Kolkata 700032, West Bengal, India}
\email{pintubhunia5206@gmail.com}
\email{kalloldada@gmail.com}

\address{$^2$ Department of Mathematics, Mahisadal Girls College, Purba Medinipur 721628, West Bengal, India}
\email{janasuva8@gmail.com} 

\address{$^3$ Department of Pure Mathematics, Center of Excellence in Analysis on Algebraic Structures (CEAAS), Ferdowsi University of Mashhad, P. O. Box 1159, Mashhad 91775, Iran.}
\email{moslehian@um.ac.ir; moslehian@yahoo.com}

\renewcommand{\subjclassname}{\textup{2020} Mathematics Subject Classification}\subjclass[]{Primary 47A12, Secondary 15A60, 47A30, 47A50}
\keywords{Numerical radius, Operator norm, Cartesian decomposition, Bounded linear operator}

\maketitle

\begin{abstract}
We develop various lower bounds for the numerical radius $w(A)$ of a bounded linear operator $A$ defined on a complex Hilbert space, which improve the existing inequality $w^2(A)\geq \frac{1}{4}\|A^*A+AA^*\|$. In particular, for $r\geq 1$, we show that
\begin{eqnarray*}\frac{1}{4}\|A^*A+AA^*\|
	\leq\frac{1}{2} \left( \frac{1}{2}\|\Re(A)+\Im(A)\|^{2r}+\frac{1}{2}\|\Re(A)-\Im(A)\|^{2r}\right)^{\frac{1}{r}}
	\leq w^{2}(A),\end{eqnarray*}
where $\Re(A)$ and $\Im(A)$ are the real and imaginary parts of $A$, respectively. Furthermore, we obtain upper bounds for $w^2(A)$ refining the well-known upper bound $w^2(A)\leq \frac{1}{2} \left(w(A^2)+\|A\|^2\right)$.  Separate complete characterizations for $w(A)=\frac{\|A\|}{2}$ and $w(A)=\frac{1}{2}\sqrt{\|A^*A+AA^*\|}$ are also given.
\end{abstract}

\section{Introduction}

The purpose of the present article is to obtain improvements of the existing well-known upper and lower bounds for the numerical radius of bounded linear operators acting on Hilbert spaces in terms of their real and imaginary parts. This is in a continuation of the study done in recent article \cite{P18}. Let us first introduce some notations and terminologies.
\smallskip

Let $\mathscr{H}$ be a complex Hilbert space with the inner product $\langle \cdot,\cdot \rangle $ and the corresponding norm $\|\cdot\|$ induced by the inner product. Let $ \mathbb{B}(\mathscr{H})$ denote the $C^*$-algebra of all bounded linear operators on $\mathscr{H}$ with the identity $I$. Let $A\in \mathbb{B}(\mathscr{H})$. We denote by $|A|=({A^*A})^{\frac{1}{2}}$ the positive square root of $A$, and  $\Re(A)=\frac{1}{2}(A+A^*)$ and $\Im(A)=\frac{1}{2\rm i}(A-A^*)$, respectively, stand for the real and imaginary parts of $A$. The numerical range of $A$, denoted as $W(A)$, is defined by $W(A)=\left \{\langle Ax,x \rangle: x\in \mathscr{H}, \|x\|=1 \right \}.$
We denote by $\|A\|$, $ c(A) $, and $w(A)$ the operator norm, the Crawford number, and the numerical radius of $A$, respectively. Recall that $$c(A)=\inf \left \{|\langle Ax,x \rangle|: x\in \mathscr{H}, \|x\|=1 \right \}$$ and $$w(A)=\sup \left \{|\langle Ax,x \rangle|: x\in \mathscr{H}, \|x\|=1 \right \}.$$ 
It is well known that the numerical radius $ w(\cdot)$ defines a norm on $\mathbb{B}(\mathscr{H})$ and is equivalent to the operator norm $\|\cdot\|$. In fact, the following double inequality holds:
\begin{eqnarray}\label{eqv}
\frac{1}{2} \|A\|\leq w({A})\leq\|A\|.
\end{eqnarray}
The inequalities in (\ref{eqv}) are sharp. The first inequality becomes equality if $A^2=0$, and the second one turns into equality if $A$ is normal. Over the years, many mathematicians have obtained various refinements of (\ref{eqv}), we refer the reader to \cite{AK, aab, FEK, MOS, OMI, SAH} and references therein. In particular, Kittaneh \cite{E} improved the inequalities in (\ref{eqv}) by establishing that 
\begin{eqnarray}\label{k5}
\frac{1}{4}\|A^*A+A{A}^*\|\leq w^2({A})\leq\frac{1}{2}\|A^*A+A{A}^*\|.
\label{d}\end{eqnarray}
\smallskip

In this paper, we obtain several refinements of the first inequality in (\ref{d}), in terms of $ \| \Re(A)+\Im(A)\|$ and $ \| \Re(A)-\Im(A)\|$. Furthermore, we obtain upper bounds for the numerical radius of bounded linear operators improving the existing inequality $w^2(A)\leq \frac{1}{2}\left (w(A^2)+\|A\|^2 \right)$ obtained by Dragomir \cite[Th. 1]{aaa}.

\section{Main Results}

\noindent We start our work with the following observation that for every $ A\in\mathbb{B}(\mathscr{H})$, 
 \begin{eqnarray}\label{eq1}
\frac{1}{4}\|A^*A+AA^*\|=\frac{1}{4} \left \| (\Re(A)+\Im(A))^2+(\Re(A)-\Im(A))^2 \right\|.
\end{eqnarray}
 
First by using the identity (\ref{eq1}), we obtain the following improvement of the first inequality in (\ref{d}).
 
\begin{theorem}\label{th1}
If $ A\in\mathbb{B}(\mathscr{H})$, then \begin{align*}
&\frac{1}{4}\|A^*A+AA^*\|\\
&\leq\frac{1}{4} \| \Re(A)+\Im(A)\|^2+ \frac{1}{4}\|\Re(A)-\Im(A)\|^2 \\
 &\leq\frac{1}{4}\| \Re(A)+\Im(A)\|^2+\frac{1}{4}\|\Re(A)-\Im(A)\|^2+\frac{1}{4}c^2(\Re(A)+\Im(A)) +\frac{1}{4}c^2(\Re(A)-\Im(A))\\ 
 &\leq w^2(A). 
 \end{align*} 
\end{theorem}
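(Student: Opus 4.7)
The plan is to write $B=\Re(A)$ and $C=\Im(A)$, and set $X=B+C$, $Y=B-C$; both are self-adjoint. By the identity (\ref{eq1}) already noted in the paper, the left-most quantity equals $\tfrac14\|X^2+Y^2\|$. The first inequality in the chain is then immediate from the triangle inequality for the operator norm together with the self-adjoint identity $\|X^2\|=\|X\|^2$ (and similarly for $Y$). The second inequality is trivial because $c^2(X),c^2(Y)\ge 0$, so it requires no work. The real content lies in the third inequality.

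To attack that third inequality, I would first record the following auxiliary fact: for any self-adjoint $P,Q\in\mathbb{B}(\mathscr{H})$,
\begin{equation*}
w^2(P+iQ)\ \geq\ \|P\|^2+c^2(Q),\qquad w^2(P+iQ)\ \geq\ \|Q\|^2+c^2(P).
\end{equation*}
The proof is short: for any unit vector $x$, the self-adjointness gives
\begin{equation*}
|\langle (P+iQ)x,x\rangle|^2=\langle Px,x\rangle^2+\langle Qx,x\rangle^2,
\end{equation*}
and choosing a sequence $\{x_n\}$ of unit vectors with $\langle Px_n,x_n\rangle^2\to\|P\|^2$ (possible since $P$ is self-adjoint) while always $\langle Qx_n,x_n\rangle^2\geq c^2(Q)$ yields the first inequality; the second is symmetric.

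Next, I would rotate by $e^{-i\pi/4}$. Since $w(\cdot)$ is rotation invariant,
\begin{equation*}
w^2(A)=w^2\!\left(e^{-i\pi/4}A\right)=w^2\!\left(\tfrac{1}{\sqrt 2}X-i\tfrac{1}{\sqrt 2}Y\right).
\end{equation*}
Applying the auxiliary fact (with $P=\tfrac{1}{\sqrt 2}X$, $Q=-\tfrac{1}{\sqrt 2}Y$, both self-adjoint, and using $c(\alpha T)=|\alpha|c(T)$ for real $\alpha$) gives
\begin{equation*}
w^2(A)\geq \tfrac12\|X\|^2+\tfrac12 c^2(Y)\quad\text{and}\quad w^2(A)\geq \tfrac12\|Y\|^2+\tfrac12 c^2(X).
\end{equation*}
Averaging these two bounds yields exactly
\begin{equation*}
w^2(A)\geq \tfrac14\|X\|^2+\tfrac14\|Y\|^2+\tfrac14 c^2(X)+\tfrac14 c^2(Y),
\end{equation*}
which, since $X=\Re(A)+\Im(A)$ and $Y=\Re(A)-\Im(A)$, is the desired third inequality.

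The main obstacle I anticipate is isolating the correct auxiliary inequality $w^2(P+iQ)\ge\|P\|^2+c^2(Q)$ and recognizing that the $45^\circ$ rotation is what converts the Cartesian parts $\Re(A),\Im(A)$ into the skewed combinations $\Re(A)\pm\Im(A)$ that appear on the right-hand side. Once that observation is made, the argument reduces to the one-line pointwise identity $|\langle(P+iQ)x,x\rangle|^2=\langle Px,x\rangle^2+\langle Qx,x\rangle^2$ and the supremum/infimum definitions of $\|\cdot\|$ (for self-adjoint operators) and $c(\cdot)$; everything else is routine symmetrization.
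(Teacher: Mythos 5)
Your proposal is correct and takes essentially the same route as the paper: your rotation by $e^{-\mathrm{i}\pi/4}$ is just a repackaging of the paper's pointwise identity $|\langle Ax,x\rangle|^2=\tfrac12\langle(\Re(A)+\Im(A))x,x\rangle^2+\tfrac12\langle(\Re(A)-\Im(A))x,x\rangle^2$, and the two bounds you average are precisely the paper's inequalities \eqref{eq2} and \eqref{eq3}. No gaps; the argument is sound.
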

\begin{proof}
	
It follows from (\ref{eq1}) that	
 
 \begin{eqnarray*}
 	\frac{1}{4}\|A^*A+AA^*\|&=&\frac{1}{4}\| (\Re(A)+\Im(A))^2+(\Re(A)-\Im(A))^2\| \\ 
 	&\leq&\frac{1}{4} \| \Re(A)+\Im(A)\|^2+ \frac{1}{4} \|\Re(A)-\Im(A)\|^2.
 \end{eqnarray*}
This is the first inequality, and  the second follows trivially. 
 
Now we prove the third inequality.
 Let $x\in\mathscr{H}$ with $\|x\|=1$. Then from the Cartesian decomposition of $A$, we get
 \begin{eqnarray*}
 \mid\langle Ax,x\rangle\mid^2&=&\langle\Re(A)x,x\rangle^2+\langle\Im(A)x,x\rangle^2\\ &=&\frac{1}{2}\left ( \langle\Re(A)x,x\rangle+\langle\Im(A)x,x\rangle\right )^2+\frac{1}{2}\left( \langle\Re(A)x,x\rangle-\langle\Im(A)x,x\rangle\right)^2\\
 &=&\frac{1}{2}\langle(\Re(A)+\Im(A))x,x\rangle^2+\frac{1}{2}\langle(\Re(A)-\Im(A))x,x\rangle^2.
 \end{eqnarray*}
 Therefore, we have the following two inequalities:
 \begin{eqnarray}\label{eq2}
 \frac{1}{2}c^2(\Re(A)+\Im(A))+\frac{1}{2}\|\Re(A)-\Im(A)\|^2\leq w^2(A) \end{eqnarray} and
 \begin{eqnarray}\label{eq3}
 \frac{1}{2}c^2(\Re(A)-\Im(A))+\frac{1}{2}\|\Re(A)+\Im(A)\|^2\leq w^2(A). \end{eqnarray}
It follows from (\ref{eq2}) and (\ref{eq3}) that
 $$\frac{1}{4}\| \Re(A)+\Im(A)\|^2+\frac{1}{4}\|\Re(A)-\Im(A)\|^2+\frac{1}{4}c^2(\Re(A)+\Im(A)) +\frac{1}{4}c^2(\Re(A)-\Im(A))\leq w^2(A).$$
\end{proof}

 Clearly, Theorem \ref{th1} refines the first inequality in (\ref{d}).
Now, the following corollary is trivially inferred from Theorem \ref{th1}.

\begin{cor}\label{cor1}
If $ A\in\mathbb{B}(\mathscr{H})$, then
\begin{eqnarray*} %\label{eq4}
\frac{1}{4}\|A^*A+AA^*\|+\frac{1}{4}c^2(\Re(A)+\Im(A)) +\frac{1}{4}c^2(\Re(A)-\Im(A)) \leq w^2(A).\end{eqnarray*}
\end{cor}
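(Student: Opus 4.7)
The plan is to derive Corollary~\ref{cor1} directly from the chain of inequalities already proved in Theorem~\ref{th1}. The corollary is essentially a rearrangement: the Crawford-number terms appearing on the left of the claimed bound can be transported there from the interior of that chain, leaving the final $w^2(A)$ estimate intact.

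Concretely, I would start from the first inequality displayed in Theorem~\ref{th1},
$$\tfrac{1}{4}\|A^*A+AA^*\| \;\le\; \tfrac{1}{4}\|\Re(A)+\Im(A)\|^2 + \tfrac{1}{4}\|\Re(A)-\Im(A)\|^2,$$
and add the nonnegative quantity $\tfrac{1}{4}c^2(\Re(A)+\Im(A)) + \tfrac{1}{4}c^2(\Re(A)-\Im(A))$ to both sides. The nonnegativity of both Crawford-number squares, which is immediate from the definition of $c(\cdot)$ as an infimum of moduli, is what licenses this one-sided addition. After the addition, the left-hand side matches exactly the left-hand side of Corollary~\ref{cor1}, while the right-hand side becomes the third expression in the chain of Theorem~\ref{th1}.

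To finish, I would invoke the last inequality of Theorem~\ref{th1}, which bounds precisely that third expression from above by $w^2(A)$, and conclude by transitivity. I do not anticipate any genuine obstacle: the derivation is a one-line rearrangement of facts already in hand. The only care required is bookkeeping — keeping the four $\tfrac{1}{4}$ coefficients correctly aligned and identifying the matching operator-norm and Crawford-number terms in the two adjacent steps of the chain supplied by Theorem~\ref{th1}.
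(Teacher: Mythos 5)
Your proposal is correct and is exactly the ``trivial inference'' the paper intends: chain the first inequality of Theorem~\ref{th1} with its last one after adding the two Crawford-number terms to both sides. (One small note: adding the same quantity to both sides of an inequality needs no nonnegativity hypothesis, so that remark in your write-up is superfluous rather than load-bearing.)
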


% ....................................................
 
Also, the following result easily derived from (\ref{eq2}) and (\ref{eq3}).

\begin{cor}
If $ A\in\mathbb{B}(\mathscr{H})$, then $ w^2(A)\geq\max \left\lbrace\beta_1,\beta_2\right\rbrace$, where $$ \beta_1= \frac{1}{2}c^2(\Re(A)+\Im(A))+\frac{1}{2}\|\Re(A)-\Im(A)\|^2,$$ $$\beta_2= \frac{1}{2}c^2(\Re(A)-\Im(A))+\frac{1}{2}\|\Re(A)+\Im(A)\|^2.$$
 
\label{th2}\end{cor}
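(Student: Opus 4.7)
The plan is to derive this corollary directly from inequalities (\ref{eq2}) and (\ref{eq3}) already established during the proof of Theorem \ref{th1}. The first inequality $w^2(A)\geq \beta_1$ is precisely (\ref{eq2}), and $w^2(A)\geq \beta_2$ is precisely (\ref{eq3}); taking the maximum of these two lower bounds instantly yields the desired conclusion. In this sense the corollary is essentially a repackaging of intermediate estimates rather than a substantively new result, so the body of the proof can be just one sentence invoking those two displays.

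For completeness, I would briefly revisit the key computation so the reader does not have to scroll back. For any unit vector $x\in\mathscr{H}$, the Cartesian decomposition gives
$$|\langle Ax,x\rangle|^2 = \langle \Re(A)x,x\rangle^2 + \langle \Im(A)x,x\rangle^2.$$
Applying the elementary identity $a^2+b^2 = \tfrac{1}{2}(a+b)^2 + \tfrac{1}{2}(a-b)^2$ with $a=\langle \Re(A)x,x\rangle$ and $b=\langle \Im(A)x,x\rangle$ produces
$$|\langle Ax,x\rangle|^2 = \tfrac{1}{2}\langle (\Re(A)+\Im(A))x,x\rangle^2 + \tfrac{1}{2}\langle (\Re(A)-\Im(A))x,x\rangle^2.$$
Since $\Re(A)\pm\Im(A)$ are selfadjoint, for every unit $x$ one has $\langle (\Re(A)+\Im(A))x,x\rangle^2 \geq c^2(\Re(A)+\Im(A))$ by definition of the Crawford number, whereas $\sup_{\|x\|=1}\langle (\Re(A)-\Im(A))x,x\rangle^2 = \|\Re(A)-\Im(A)\|^2$ by the standard characterization of the operator norm for a selfadjoint operator. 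Taking the supremum over unit $x$ of the identity above yields $w^2(A)\geq \beta_1$; interchanging the roles of the two summands gives $w^2(A)\geq \beta_2$.

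There is no real obstacle here. The only subtlety worth flagging is that the Crawford-number lower bound on one summand must be uniform in $x$ in order for the supremum of the sum to preserve one term as a constant while the other attains the operator norm; this uniformity is built into the definition $c(T)=\inf_{\|x\|=1}|\langle Tx,x\rangle|$, so the argument goes through cleanly. Taking the maximum of the two resulting lower bounds completes the proof.
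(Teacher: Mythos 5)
Your proposal is correct and matches the paper's approach exactly: the paper likewise derives this corollary immediately from inequalities (\ref{eq2}) and (\ref{eq3}), which are established inside the proof of Theorem \ref{th1} by the same Cartesian-decomposition identity and the same supremum argument you reproduce. Nothing further is needed.
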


\begin{remark}\label{remi}
(i) We have 
{\small \begin{align*}
 	&\max \left\lbrace\beta_1,\beta_2\right\rbrace\\ 
 &=\frac{1}{2}\left\lbrace \frac{c^2(\Re(A)+\Im(A))+\|\Re(A)-\Im(A)\|^2+c^2(\Re(A)-\Im(A))+\|\Re(A)+\Im(A)\|^2}{2}\right\rbrace\\&\quad+\frac{1}{2}\left\lbrace\frac{\mid \|\Re(A)+\Im(A)\|^2-\|\Re(A)-\Im(A)\|^2+c^2(\Re(A)-\Im(A))-c^2(\Re(A)+\Im(A))\mid}{2}\right\rbrace\\&\geq\frac{1}{4}\left\lbrace c^2(\Re(A)+\Im(A))+c^2(\Re(A)-\Im(A))\right\rbrace+\frac{1}{4}\|(\Re(A)-\Im(A))^2+(\Re(A)+\Im(A))^2\|\\&\quad+\frac{1}{4}\left | \|\Re(A)+\Im(A)\|^2-\|\Re(A)-\Im(A)\|^2+c^2(\Re(A)-\Im(A))-c^2(\Re(A)+\Im(A)) \right | \\&=\frac{1}{4}\|A^*A+AA^*\|+\frac{1}{4}c^2(\Re(A)+\Im(A)) +\frac{1}{4}c^2(\Re(A)-\Im(A))\\&\quad+\frac{1}{4}\mid \|\Re(A)+\Im(A)\|^2-\|\Re(A)-\Im(A)\|^2+c^2(\Re(A)-\Im(A))-c^2(\Re(A)+\Im(A))\mid. \end{align*}}
Thus, 
{\small\begin{align*}
 w^2(A) \geq&\frac{1}{4}\|A^*A+AA^*\|+\frac{1}{4}c^2(\Re(A)+\Im(A)) +\frac{1}{4}c^2(\Re(A)-\Im(A))\\&+\frac{1}{4}\left | \|\Re(A)+\Im(A)\|^2-\|\Re(A)-\Im(A)\|^2+c^2(\Re(A)-\Im(A))-c^2(\Re(A)+\Im(A)) \right |. \end{align*} }
(ii) Also, we remark that Corollary \ref{th2} is stronger than the recently obtained inequality in \cite[Th. 2.3]{bib1}. 
\end{remark}

%...........................................

To prove the next refinement of the first inequality in (\ref{d}), we need the following lemma, which can be found in \cite[Th. 2.17]{P18}. 
\begin{lemma}\label{lem16}
	Let $ A,D\in\mathbb{B}(\mathscr{H})$. Then $$\|A+D\|^2\leq \|A\|^2+\|D\|^2+\frac{1}{2}\|A^*A+D^*D\|+w(A^*D)$$ and $$\|A+D\|^2\leq \|A\|^2+\|D\|^2+\frac{1}{2}\|AA^*+DD^*\|+w(AD^*).$$
\end{lemma}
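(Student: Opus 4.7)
The plan is to deduce the first inequality by averaging two elementary estimates of $\|A+D\|^2$, and to obtain the second inequality by applying the first one with $A$ and $D$ replaced by $A^*$ and $D^*$.

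For the first inequality, the starting point is the $C^*$-identity
\[
\|A+D\|^2=\|(A+D)^*(A+D)\|=\bigl\|(A^*A+D^*D)+(A^*D+D^*A)\bigr\|.
\]
Here $A^*A+D^*D$ is positive and $A^*D+D^*A=2\Re(A^*D)$ is self-adjoint. Using the triangle inequality for $\|\cdot\|$, the fact that $\|T\|=w(T)$ for self-adjoint $T$, and that $w$ is a norm satisfying $w(S^*)=w(S)$, I obtain
\[
\|A+D\|^2\leq \|A^*A+D^*D\|+2\|\Re(A^*D)\|\leq \|A^*A+D^*D\|+2w(A^*D). \quad (\ast)
\]
Separately, the ordinary triangle inequality combined with $2\|A\|\|D\|\leq \|A\|^2+\|D\|^2$ yields
\[
\|A+D\|^2\leq (\|A\|+\|D\|)^2\leq 2\|A\|^2+2\|D\|^2. \quad (\ast\ast)
\]
Taking the arithmetic mean of $(\ast)$ and $(\ast\ast)$ produces exactly
\[
\|A+D\|^2\leq \|A\|^2+\|D\|^2+\tfrac{1}{2}\|A^*A+D^*D\|+w(A^*D),
\]
which is the first claim.

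For the second inequality, I would substitute $A^*$ for $A$ and $D^*$ for $D$ in the first inequality and then use $\|A^*+D^*\|=\|A+D\|$ together with $(A^*)^*A^*=AA^*$, $(D^*)^*D^*=DD^*$, and $w((A^*)^*D^*)=w(AD^*)$ to recast the bound in the stated form. The main conceptual obstacle is to recognize that the claimed hybrid inequality is precisely the arithmetic mean of the two elementary bounds $(\ast)$ and $(\ast\ast)$: each of them in isolation is strictly weaker, since $(\ast)$ carries a factor of $2$ on $w(A^*D)$ and no bare $\|A\|^2+\|D\|^2$ term, while $(\ast\ast)$ contains no numerical-radius term at all. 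Once the averaging principle is noticed, both $(\ast)$ and $(\ast\ast)$ reduce to one-line arguments and no further computation is required.
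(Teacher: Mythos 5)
Your proof is correct. Note that the paper does not prove this lemma at all: it is quoted from \cite[Th.\ 2.17]{P18}, so there is no in-paper argument to match against. Your derivation is a legitimate self-contained one. The source's proof works at the vector level: for a unit vector $x$ one expands $\|(A+D)x\|^2=\|Ax\|^2+\|Dx\|^2+2\Re\langle Dx,Ax\rangle$, then splits the cross term as $2|\langle A^*Dx,x\rangle|\leq \|Ax\|\,\|Dx\|+|\langle A^*Dx,x\rangle|\leq \frac{1}{2}\langle (A^*A+D^*D)x,x\rangle+w(A^*D)$ and bounds $\|Ax\|^2+\|Dx\|^2\leq\|A\|^2+\|D\|^2$. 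Your argument is the operator-level shadow of the same idea: your estimate $(\ast)$, obtained from the $C^*$-identity together with $\|2\Re(A^*D)\|=2w(\Re(A^*D))\leq 2w(A^*D)$, and your estimate $(\ast\ast)$ are each correct, and their arithmetic mean is exactly the asserted bound; the passage to the second inequality by replacing $A,D$ with $A^*,D^*$ and using $\|T^*\|=\|T\|$ and $w(T^*)=w(T)$ is also fine. The vector-level proof is marginally sharper in spirit (it keeps $\langle (A^*A+D^*D)x,x\rangle$ and $|\langle A^*Dx,x\rangle|$ tied to the same $x$ before taking suprema), but both yield the stated inequality, so your averaging route is a perfectly acceptable, arguably cleaner, alternative.
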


\begin{theorem}\label{theor16}
	If $ A\in\mathbb{B}(\mathscr{H})$, then \begin{align*}
		&\frac{1}{4}\|A^*A+AA^*\|\\&\leq\frac{1}{4}\bigg\{ \frac{3}{2} \|\Re(A)+\Im(A)\|^4+\frac{3}{2}\|\Re(A)-\Im(A)\|^4
		+ \|\Re(A)+\Im(A)\|^2 \|\Re(A)-\Im(A)\|^2\bigg\}^{\frac{1}{2}}\\&\leq w^2(A).\end{align*}
\end{theorem}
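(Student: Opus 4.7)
Set $B := \Re(A)+\Im(A)$ and $C := \Re(A)-\Im(A)$; both are self-adjoint. By the identity (\ref{eq1}), the leftmost quantity in the theorem equals $\frac{1}{4}\|B^{2}+C^{2}\|$, so the whole statement reduces to a two-sided estimate on $\|B^{2}+C^{2}\|$: an upper bound in terms of $\|B\|$ and $\|C\|$ for the first inequality, and a control of the resulting expression by $w^{2}(A)$ for the second.

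For the upper estimate, I would apply the first inequality of Lemma \ref{lem16} with $A$ replaced by $B^{2}$ and $D$ replaced by $C^{2}$. Since $B^{2}$ and $C^{2}$ are self-adjoint, $(B^{2})^{*}B^{2}=B^{4}$, $(C^{2})^{*}C^{2}=C^{4}$, and $(B^{2})^{*}C^{2}=B^{2}C^{2}$, so Lemma \ref{lem16} gives
\[
\|B^{2}+C^{2}\|^{2}\;\le\;\|B\|^{4}+\|C\|^{4}+\tfrac{1}{2}\|B^{4}+C^{4}\|+w(B^{2}C^{2}).
\]
The two routine bounds $\|B^{4}+C^{4}\|\le \|B\|^{4}+\|C\|^{4}$ and $w(B^{2}C^{2})\le \|B^{2}C^{2}\|\le \|B\|^{2}\|C\|^{2}$ then yield exactly the coefficients $3/2$, $3/2$, $1$ that appear under the square root, and dividing by $4$ after taking square roots finishes the first inequality.

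For the lower estimate, I reuse the two auxiliary inequalities (\ref{eq2}) and (\ref{eq3}) already established inside the proof of Theorem \ref{th1}. Dropping their nonnegative Crawford-number summands gives the clean bounds $\|B\|^{2}\le 2w^{2}(A)$ and $\|C\|^{2}\le 2w^{2}(A)$. Substituting,
\[
\tfrac{3}{2}\|B\|^{4}+\tfrac{3}{2}\|C\|^{4}+\|B\|^{2}\|C\|^{2}\;\le\;6w^{4}(A)+6w^{4}(A)+4w^{4}(A)\;=\;16\,w^{4}(A),
\]
so taking square roots and dividing by $4$ produces $w^{2}(A)$ on the right, as required. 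The only step with any real bite is invoking Lemma \ref{lem16} in place of the crude triangle inequality $\|B^{2}+C^{2}\|\le \|B\|^{2}+\|C\|^{2}$: keeping the coefficient of the cross term $\|B\|^{2}\|C\|^{2}$ at $1$ rather than $2$ is precisely what makes the constants $3/2,3/2,1$ and the factor $16=6+6+4$ line up so that the upper bound from the lemma and the lower bound from (\ref{eq2})--(\ref{eq3}) meet exactly.
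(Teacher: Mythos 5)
Your proposal is correct and follows essentially the same route as the paper: apply the identity (\ref{eq1}), invoke the first inequality of Lemma \ref{lem16} with the self-adjoint operators $(\Re(A)+\Im(A))^2$ and $(\Re(A)-\Im(A))^2$, bound the remainder terms by $\|B\|^4+\|C\|^4$ and $\|B\|^2\|C\|^2$, and finish with the bounds $\|B\|^2\le 2w^2(A)$ and $\|C\|^2\le 2w^2(A)$ coming from (\ref{eq2p}) and (\ref{eq3p}). All constants check out, so there is nothing to add.
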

\begin{proof}
It follows from (\ref{eq1}) that
	{\small\begin{align*}
		&\frac{1}{16}\|A^*A+AA^*\|^2\\&=\frac{1}{16}\| (\Re(A)+\Im(A))^2+(\Re(A)-\Im(A))^2\|^2\\&\leq\frac{1}{16}\left\lbrace\|\Re(A)+\Im(A)\|^4+\|\Re(A)-\Im(A)\|^4+\frac{1}{2}\|(\Re(A)+\Im(A))^4+(\Re(A)-\Im(A))^4\|\right\rbrace\\&\quad+\frac{1}{16} w((\Re(A)+\Im(A))^2(\Re(A)-\Im(A))^2), \qquad\qquad \qquad\qquad \qquad\qquad (\text{using Lemma \ref{lem16}})
		\\&\leq\frac{1}{16}\left\lbrace\|\Re(A)+\Im(A)\|^4+\|\Re(A)-\Im(A)\|^4+\frac{1}{2}\left(\|\Re(A)+\Im(A)\|^4+\|\Re(A)-\Im(A)\|^4\right)\right\rbrace\\&\quad+\frac{1}{16}\|\Re(A)+\Im(A)\|^2\|\Re(A)-\Im(A)\|^2 \\
		&=\frac{1}{16}\bigg\{ \frac{3}{2} \|\Re(A)+\Im(A)\|^4+\frac{3}{2}\|\Re(A)-\Im(A)\|^4
		+ \|\Re(A)+\Im(A)\|^2 \|\Re(A)-\Im(A)\|^2\bigg\}\\
		&\leq w^4(A),
	\end{align*}}
	where the last inequality is deduced from \eqref{eq2p} and \eqref{eq3p}.
\end{proof}

%...............................................
Now, we state a lemma.

\begin{lemma}\cite[Th. 2.2]{aab} Let $ A,D\in\mathbb{B}(\mathscr{H})$. Then $$\|A+D\|^2\leq 2 \max \left\lbrace\|A^*A+D^*D\|,\|AA^*+DD^*\|\right\rbrace.$$
\label{lem15}\end{lemma}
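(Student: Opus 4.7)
The plan is to prove each of the two bounds $\|A+D\|^2 \leq 2\|A^*A+D^*D\|$ and $\|A+D\|^2 \leq 2\|AA^*+DD^*\|$ separately by a direct unit-vector computation, and then combine them. In fact the natural argument establishes the inequality with $\min$ in place of $\max$, which of course implies the stated (weaker) form.

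First I would fix an arbitrary unit vector $x \in \mathscr{H}$ and expand
\begin{eqnarray*}
\|(A+D)x\|^2 = \|Ax\|^2 + \|Dx\|^2 + 2\,\Re\langle Ax, Dx\rangle.
\end{eqnarray*}
Applying Cauchy--Schwarz to control $|\Re\langle Ax,Dx\rangle|$ by $\|Ax\|\,\|Dx\|$, and then the elementary inequality $2ab \leq a^2+b^2$, yields $\|(A+D)x\|^2 \leq 2(\|Ax\|^2 + \|Dx\|^2) = 2\,\langle (A^*A+D^*D)x, x\rangle$. Taking the supremum over all unit vectors $x$ produces $\|A+D\|^2 \leq 2\|A^*A+D^*D\|$.

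The second bound follows by applying exactly the same argument to the adjoint pair $(A^*, D^*)$ in place of $(A, D)$, which gives $\|A^*+D^*\|^2 \leq 2\|AA^*+DD^*\|$; since $\|A+D\| = \|(A+D)^*\|$, this is the required estimate. Combining the two bounds by taking their minimum (and a fortiori their maximum) completes the proof.

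I do not expect any real obstacle: the argument uses only the polarization expansion of $\|(A+D)x\|^2$, Cauchy--Schwarz, the AM--GM inequality, and the identification of $\|Ax\|^2 + \|Dx\|^2$ with $\langle(A^*A+D^*D)x, x\rangle$. The only mild subtlety worth noting is that this line of reasoning actually establishes the stronger statement with $\min$, so the $\max$ on the right-hand side of the lemma is not sharp from this proof.
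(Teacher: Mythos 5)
The paper states this lemma without proof, citing it from \cite[Th. 2.2]{aab}, so there is no in-paper argument to compare against; your proof has to stand on its own, and it does. For a unit vector $x$, the chain $\|(A+D)x\|^2=\|Ax\|^2+\|Dx\|^2+2\Re\langle Ax,Dx\rangle\leq 2\left(\|Ax\|^2+\|Dx\|^2\right)=2\langle (A^*A+D^*D)x,x\rangle\leq 2\|A^*A+D^*D\|$ is valid (the last step because $A^*A+D^*D$ is positive, or just by Cauchy--Schwarz), and applying the same estimate to $(A^*,D^*)$ together with $\|A+D\|=\|(A+D)^*\|$ gives the companion bound; so you indeed obtain the inequality with $\min$ in place of $\max$, which a fortiori yields the stated form. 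Your closing remark that this is genuinely stronger is correct and is not a sign of a hidden error: it is the operator parallelogram law $(A+D)^*(A+D)+(A-D)^*(A-D)=2(A^*A+D^*D)$, which gives $(A+D)^*(A+D)\leq 2(A^*A+D^*D)$ and hence $\|A+D\|^2\leq 2\|A^*A+D^*D\|$ in one line; the $\max$ form merely reflects the route taken in the cited source. Note also that in the only application of the lemma in this paper (Theorem \ref{thn16}) the two operators are positive, so $A^*A+D^*D=AA^*+DD^*$ and the distinction between $\max$ and $\min$ disappears there anyway.
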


Based on the above lemma, we obtain the following refinement of the first inequality in (\ref{d}).

\begin{theorem}\label{thn16} If
 $ A\in\mathbb{B}(\mathscr{H})$, then $$\frac{1}{4}\|A^*A+AA^*\| \leq\frac{1}{2\sqrt{2}}\left(\| \Re(A)+\Im(A)\|^4+\| \Re(A)-\Im(A) \|^4\right)^{\frac{1}{2}} \leq w^2(A).$$
\end{theorem}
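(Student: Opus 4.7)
The strategy is to apply Lemma \ref{lem15} to the pair of self-adjoint operators $(\Re(A)+\Im(A))^2$ and $(\Re(A)-\Im(A))^2$, and then invoke the inequalities \eqref{eq2} and \eqref{eq3} established in the proof of Theorem \ref{th1}.

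Set $X=\Re(A)+\Im(A)$ and $Y=\Re(A)-\Im(A)$; both are self-adjoint, so $X^2,Y^2$ are positive and self-adjoint. By the identity \eqref{eq1},
\[
\tfrac14\|A^*A+AA^*\|=\tfrac14\|X^2+Y^2\|.
\]
Apply Lemma \ref{lem15} with $X^2$ in place of $A$ and $Y^2$ in place of $D$: since $(X^2)^*(X^2)=(X^2)(X^2)^*=X^4$ and similarly for $Y$, the lemma yields
\[
\|X^2+Y^2\|^2 \le 2\,\|X^4+Y^4\|.
\]
Then a triangle-inequality estimate gives $\|X^4+Y^4\|\le\|X\|^4+\|Y\|^4$. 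Combining, and taking the square root,
\[
\tfrac14\|A^*A+AA^*\|\le \tfrac{1}{2\sqrt{2}}\bigl(\|X\|^4+\|Y\|^4\bigr)^{1/2},
\]
which is the first asserted inequality.

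For the second inequality, observe that \eqref{eq2} and \eqref{eq3}, upon dropping the non-negative Crawford terms, imply
\[
\tfrac12\|Y\|^2\le w^2(A)\quad\text{and}\quad \tfrac12\|X\|^2\le w^2(A),
\]
so $\|X\|^4+\|Y\|^4\le 8\,w^4(A)$. Taking the square root and multiplying by $1/(2\sqrt{2})$ gives the desired bound $\tfrac{1}{2\sqrt 2}(\|X\|^4+\|Y\|^4)^{1/2}\le w^2(A)$.

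The whole argument is essentially a bookkeeping exercise; the only step that requires a little care is the application of Lemma \ref{lem15} to squared operators, where the self-adjointness of $X$ and $Y$ collapses the $\max$ over the two expressions $\|A^*A+D^*D\|$ and $\|AA^*+DD^*\|$ into the single quantity $\|X^4+Y^4\|$. No further obstacles are anticipated.
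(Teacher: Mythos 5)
Your proposal is correct and follows essentially the same route as the paper's own proof: apply Lemma \ref{lem15} to the positive operators $(\Re(A)+\Im(A))^2$ and $(\Re(A)-\Im(A))^2$ via the identity \eqref{eq1}, then bound $\|X^4+Y^4\|$ by $\|X\|^4+\|Y\|^4$, and finish with the inequalities \eqref{eq2p} and \eqref{eq3p}. The only cosmetic difference is that you derive \eqref{eq2p} and \eqref{eq3p} by dropping the Crawford terms from \eqref{eq2} and \eqref{eq3}, which is exactly how the paper obtains them.
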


\begin{proof}
It follows from (\ref{eq1}) that
\begin{eqnarray*}
\frac{1}{4}\|A^*A+AA^*\|&=&\frac{1}{4}\| (\Re(A)+\Im(A))^2+(\Re(A)-\Im(A))^2\|\\&\leq&\frac{1}{2\sqrt{2}}\| (\Re(A)+\Im(A))^4+(\Re(A)-\Im(A))^4\|^{\frac{1}{2}}, \qquad(\text{by Lemma $\ref{lem15}$})\\&\leq&\frac{1}{2\sqrt{2}}\left(\| \Re(A)+\Im(A)\|^4+\| \Re(A)-\Im(A)\|^4\right)^{\frac{1}{2}}\\&\leq& w^2(A),
\end{eqnarray*}
where we deduce the last inequality from \eqref{eq2p} and \eqref{eq3p}.
\end{proof}

We observe here that the convexity of the function $f(t)={t}^2$ ensures that the first inequality in Theorem \ref{thn16} is better than the first inequality in Theorem \ref{th1}. Also, we observe that the second inequality in Theorem \ref{thn16} is better than the second inequality in Theorem \ref{theor16}.

%..................................

In the next theorem, we obtain another improvement of (\ref{d}). First we note that (\ref{eq2}) and (\ref{eq3}) imply  the following two inequalities, respectively: 

\begin{eqnarray}\label{eq2p}
\frac{1}{2}\|\Re(A)-\Im(A)\|^2\leq w^2(A) \end{eqnarray} 
and
\begin{eqnarray}\label{eq3p}
\frac{1}{2}\|\Re(A)+\Im(A)\|^2\leq w^2(A). \end{eqnarray}
Now, by employing the convexity property of the function $f(t)=t^r, \, r\geq 1$, in the first inequality in Theorem \ref{th1} and using inequalities (\ref{eq2p}) and (\ref{eq3p}), we get the following inequality.

\begin{theorem}\label{thp}
	If $ A\in\mathbb{B}(\mathscr{H})$, then for $r\geq 1$,
	
	\begin{eqnarray*}\frac{1}{4}\|A^*A+AA^*\|
		&\leq&\frac{1}{2} \left( \frac{1}{2}\|\Re(A)+\Im(A)\|^{2r}+\frac{1}{2}\|\Re(A)-\Im(A)\|^{2r}\right)^{\frac{1}{r}}
		\leq w^{2}(A).\end{eqnarray*}
\end{theorem}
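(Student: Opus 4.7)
The plan is to prove the two inequalities separately. The left inequality follows from Theorem \ref{th1} together with the power-mean inequality, and the right inequality follows by raising \eqref{eq2p} and \eqref{eq3p} to the $r$-th power. So essentially I am running the chain already present in the paper through the convexity of $t \mapsto t^r$ on $[0,\infty)$ for $r \geq 1$.

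For the left inequality, first I would rewrite the first bound of Theorem \ref{th1} as
\begin{equation*}
\frac{1}{4}\|A^*A+AA^*\| \leq \frac{1}{2}\left(\frac{1}{2}\|\Re(A)+\Im(A)\|^2 + \frac{1}{2}\|\Re(A)-\Im(A)\|^2\right).
\end{equation*}
Next, set $a=\|\Re(A)+\Im(A)\|$ and $b=\|\Re(A)-\Im(A)\|$. The standard power-mean inequality (monotonicity of $p \mapsto ((a^p+b^p)/2)^{1/p}$), or equivalently Jensen's inequality for the convex function $t \mapsto t^r$ applied to $a^2, b^2$ with weights $1/2,1/2$, gives
\begin{equation*}
\frac{a^{2}+b^{2}}{2} \leq \left(\frac{a^{2r}+b^{2r}}{2}\right)^{1/r} \qquad (r \geq 1).
\end{equation*}
Substituting back yields the left inequality of Theorem \ref{thp}, and at $r=1$ one recovers Theorem \ref{th1} verbatim, which is a good sanity check.

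For the right inequality, I would invoke \eqref{eq2p} and \eqref{eq3p}, which state $\|\Re(A)\pm\Im(A)\|^{2} \leq 2w^{2}(A)$. Raising each to the $r$-th power (allowable since both sides are nonnegative and $t \mapsto t^r$ is monotone on $[0,\infty)$) and averaging,
\begin{equation*}
\frac{1}{2}\|\Re(A)+\Im(A)\|^{2r} + \frac{1}{2}\|\Re(A)-\Im(A)\|^{2r} \leq \bigl(2w^{2}(A)\bigr)^{r}.
\end{equation*}
Taking $r$-th roots and multiplying by $1/2$ produces the desired upper bound $w^{2}(A)$.

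I do not anticipate a real obstacle: the two key inputs (Theorem \ref{th1} and \eqref{eq2p}--\eqref{eq3p}) are already established earlier in the paper, and the only analytic content is the elementary power-mean inequality. The mildest point of care is ensuring the direction of the power-mean monotonicity is applied correctly (exponent $1$ below, exponent $r \geq 1$ above) on the positive quantities $a^{2}, b^{2}$; this is transparent from the convexity of $t \mapsto t^{r}$.
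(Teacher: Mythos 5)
Your proposal is correct and is exactly the argument the paper intends: the authors state (without writing out details) that the theorem follows from the first inequality of Theorem \ref{th1} via convexity of $t\mapsto t^r$, together with \eqref{eq2p} and \eqref{eq3p}, which is precisely your chain. Both the power-mean step and the averaging of the $r$-th powers check out.
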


\begin{remark}
	Clearly, Theorem \ref{thp} is a generalization of Theorem \ref{thn16}.
	We would like to remark that the second inequality in Theorem \ref{thp} gives more refinement as $r$ increases. 
\end{remark}

%...............................

To prove our next result, we need the following lemma, which can be found in \cite{U}.

\begin{lemma}\label{lem14}
Let $A,D \in\mathbb{B}(\mathscr{H})$ be positive. Then $$ \|A+D\|\leq\max\left\lbrace\|A\|,\|D\|\right\rbrace+\|AD\|^\frac{1}{2}.$$\end{lemma}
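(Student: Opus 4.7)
The plan is to exploit the factorization $A+D = RR^*$ where $R := \begin{pmatrix} A^{1/2} & D^{1/2} \end{pmatrix}$ is a row operator from $\mathscr{H}\oplus\mathscr{H}$ to $\mathscr{H}$, which is well-defined since $A,D$ are positive. Using the identity $\|RR^*\|=\|R^*R\|$, the problem is recast as
\[
\|A+D\|=\left\|\begin{pmatrix} A & A^{1/2}D^{1/2} \\ D^{1/2}A^{1/2} & D \end{pmatrix}\right\|,
\]
i.e., the estimation of the norm of a $2\times 2$ block operator on $\mathscr{H}\oplus\mathscr{H}$.

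Next, I would split this block operator into its diagonal and off-diagonal parts and apply the triangle inequality:
\[
\left\|\begin{pmatrix} A & A^{1/2}D^{1/2} \\ D^{1/2}A^{1/2} & D \end{pmatrix}\right\|\leq \left\|\begin{pmatrix} A & 0 \\ 0 & D \end{pmatrix}\right\| + \left\|\begin{pmatrix} 0 & A^{1/2}D^{1/2} \\ D^{1/2}A^{1/2} & 0 \end{pmatrix}\right\|.
\]
The diagonal block has norm $\max\{\|A\|,\|D\|\}$. The off-diagonal block has the self-adjoint form $\begin{pmatrix} 0 & X \\ X^* & 0 \end{pmatrix}$ with $X = A^{1/2}D^{1/2}$, and its norm equals $\|X\|=\|A^{1/2}D^{1/2}\|$.

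The final step is to replace $\|A^{1/2}D^{1/2}\|$ by $\|AD\|^{1/2}$. I would use the $C^*$-identity $\|A^{1/2}D^{1/2}\|^2 = \|A^{1/2}DA^{1/2}\|$, note that $A^{1/2}DA^{1/2}$ is positive so its norm equals its spectral radius, and then observe that $A^{1/2}DA^{1/2}$ and $AD$ share the same nonzero spectrum (since the products $(A^{1/2})(DA^{1/2})$ and $(DA^{1/2})(A^{1/2})$ have equal nonzero spectra). Hence their spectral radii coincide, and this common value is bounded above by $\|AD\|$, yielding $\|A^{1/2}D^{1/2}\|\leq\|AD\|^{1/2}$.

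The main obstacle, I expect, is this last step: the passage from $A^{1/2}D^{1/2}$ to $AD$ relies on the spectral-radius identification together with the inequality $r(AD)\leq\|AD\|$, and this is the one non-routine ingredient; the block-matrix factorization and the triangle inequality on block operators are standard. Assembling the three steps gives $\|A+D\|\leq \max\{\|A\|,\|D\|\}+\|AD\|^{1/2}$, as claimed.
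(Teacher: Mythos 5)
Your proof is correct. Note, however, that the paper does not prove this lemma at all: it is quoted from Davidson and Power \cite{U} and used as a black box, so there is no in-paper argument to compare against. Your argument is the standard self-contained route: the factorization $A+D=RR^*$ with $R=\begin{pmatrix} A^{1/2} & D^{1/2}\end{pmatrix}$ and the identity $\|RR^*\|=\|R^*R\|$ reduce the claim to a $2\times 2$ block operator, the triangle inequality separates the diagonal part (norm $\max\{\|A\|,\|D\|\}$) from the off-diagonal part (norm $\|A^{1/2}D^{1/2}\|$), and the passage $\|A^{1/2}D^{1/2}\|^2=\|A^{1/2}DA^{1/2}\|=r(A^{1/2}DA^{1/2})=r(DA)=r(AD)\leq\|AD\|$ is exactly the right way to replace $\|A^{1/2}D^{1/2}\|$ by $\|AD\|^{1/2}$; every step checks out (the parenthetical in your last step literally identifies the nonzero spectrum with that of $DA$ rather than $AD$, but $r(DA)=r(AD)$, so nothing is lost). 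What your write-up buys over the paper's citation is a short, complete proof using only the $C^*$-identity, the norm of an off-diagonal self-adjoint block, and the equality of nonzero spectra of $XY$ and $YX$ --- no external reference needed.
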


\begin{theorem}\label{theor17}
	If $ A\in\mathbb{B}(\mathscr{H})$, then
	\begin{align*}
& \frac{1}{4}\|A^*A+AA^*\|\\
&\leq\frac{1}{4}\left [ \max\left\lbrace\|\Re(A)+\Im(A)\|^2,\| \Re(A)-\Im(A)\|^2\right\rbrace + \|\Re(A)+\Im(A)\|\|\Re(A)-\Im(A)\|\right] \\
&\leq w^2(A).\end{align*}

\end{theorem}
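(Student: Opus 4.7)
The plan is to follow exactly the same strategy that was already used for Theorems \ref{theor16} and \ref{thn16}: start from the identity (\ref{eq1}), apply the new norm estimate (Lemma \ref{lem14}) to the two positive operators in the display, and then close out with the known lower bounds (\ref{eq2p}) and (\ref{eq3p}).

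\medskip

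First I would note that $P := (\Re(A)+\Im(A))^2$ and $Q := (\Re(A)-\Im(A))^2$ are both positive, because $\Re(A)\pm\Im(A)$ are self-adjoint. By (\ref{eq1}),
\[
\tfrac{1}{4}\|A^*A+AA^*\| = \tfrac{1}{4}\|P+Q\|.
\]
Now Lemma \ref{lem14} applies directly and gives
\[
\|P+Q\| \le \max\{\|P\|,\|Q\|\} + \|PQ\|^{1/2}.
\]
For the first two terms, $\|P\|=\|\Re(A)+\Im(A)\|^2$ and $\|Q\|=\|\Re(A)-\Im(A)\|^2$. For the cross term, submultiplicativity of the operator norm gives
\[
\|PQ\|^{1/2} \le \bigl(\|P\|\,\|Q\|\bigr)^{1/2} = \|\Re(A)+\Im(A)\|\,\|\Re(A)-\Im(A)\|.
\]
Combining these produces the first inequality in the statement.

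\medskip

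For the second inequality, I would simply feed in (\ref{eq2p}) and (\ref{eq3p}), which say that
\[
\|\Re(A)+\Im(A)\|^2 \le 2w^2(A), \qquad \|\Re(A)-\Im(A)\|^2 \le 2w^2(A).
\]
Hence $\max\{\|\Re(A)+\Im(A)\|^2,\|\Re(A)-\Im(A)\|^2\} \le 2w^2(A)$, and
\[
\|\Re(A)+\Im(A)\|\,\|\Re(A)-\Im(A)\| \le \sqrt{2w^2(A)}\cdot\sqrt{2w^2(A)} = 2w^2(A).
\]
Adding these and dividing by $4$ yields the desired upper bound $w^2(A)$.

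\medskip

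There isn't really a serious obstacle here: the only thing to spot is that the two summands inside the identity (\ref{eq1}) are positive operators, so that Lemma \ref{lem14} is applicable verbatim; after that the estimates are just submultiplicativity together with the already-established inequalities (\ref{eq2p}) and (\ref{eq3p}).
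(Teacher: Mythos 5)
Your proof is correct and follows essentially the same route as the paper: the identity (\ref{eq1}), Lemma \ref{lem14} applied to the two positive operators $(\Re(A)\pm\Im(A))^2$, submultiplicativity for the cross term, and then (\ref{eq2p}) and (\ref{eq3p}) to close with $w^2(A)$. Nothing to add.
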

\begin{proof}
Equality (\ref{eq1}) and Lemma \ref{lem14} ensure that
\begin{align*} &\frac{1}{4}\|A^*A+AA^*\|\\&=\frac{1}{4}\|(\Re(A)+\Im(A) )^2+(\Re(A)-\Im(A))^2\|\\&\leq\frac{1}{4}\left[\max\left\lbrace\|\Re(A)+\Im(A)\|^2,\| \Re(A)-\Im(A)\|^2\right\rbrace + \|(\Re(A)+\Im(A))^2(\Re(A)-\Im(A))^2\|^\frac{1}{2}\right]
	\\&\leq\frac{1}{4}\left[\max\left\lbrace\|\Re(A)+\Im(A)\|^2,\| \Re(A)-\Im(A)\|^2\right\rbrace + \|(\Re(A)+\Im(A))\|\|(\Re(A)-\Im(A))\|\right]\\&\leq w^2(A),
\end{align*}
in which we employ \eqref{eq2p} and \eqref{eq3p}.
\end{proof}

We now concentrate our study on the equality of the first inequality in (\ref{d}).

\begin{cor}\label{corp2}
	Let $A\in \mathbb{B}(\mathscr{H})$. If $w^2(A)=\frac{1}{4}\|A^*A+AA^*\|$, then the following assertions hold:\\\\
	(i) There exists a sequence $\{x_n\}$ in $\mathscr{H}$ with $\|x_n\|=1$ such that $$\lim_{n\rightarrow \infty }|\langle \Re(A)x_n,x_n\rangle|=\lim_{n\rightarrow \infty }|\langle \Im(A)x_n,x_n\rangle|.$$
	(ii) $\|\Re(A)+\Im(A)\|^2=\|\Re(A)-\Im(A)\|^2=\frac{1}{2}\|A^*A+AA^*\|$.
\end{cor}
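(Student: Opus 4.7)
The approach is to establish (ii) first and then exploit it to produce the sequence claimed in (i). For (ii), observe that Theorem~\ref{th1} already supplies the chain
\[
\tfrac{1}{4}\|A^*A+AA^*\|\leq\tfrac{1}{4}\|\Re(A)+\Im(A)\|^{2}+\tfrac{1}{4}\|\Re(A)-\Im(A)\|^{2}\leq w^{2}(A),
\]
so the hypothesis $w^{2}(A)=\tfrac{1}{4}\|A^{*}A+AA^{*}\|$ collapses both inequalities to equalities, yielding
\[
\|\Re(A)+\Im(A)\|^{2}+\|\Re(A)-\Im(A)\|^{2}=4w^{2}(A).
\]
Since \eqref{eq2p} and \eqref{eq3p} independently give $\|\Re(A)\pm\Im(A)\|^{2}\leq 2w^{2}(A)$, two non-negative numbers each at most $2w^{2}(A)$ summing to $4w^{2}(A)$ must both equal $2w^{2}(A)=\tfrac{1}{2}\|A^{*}A+AA^{*}\|$, which is (ii).

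For (i), I would use (ii) to note that $\Re(A)+\Im(A)$ is self-adjoint with norm $\sqrt{2}\,w(A)$, so there is a unit sequence $\{x_{n}\}$ with $|\langle(\Re(A)+\Im(A))x_{n},x_{n}\rangle|\to\sqrt{2}\,w(A)$. Writing $a_{n}:=\langle\Re(A)x_{n},x_{n}\rangle$ and $b_{n}:=\langle\Im(A)x_{n},x_{n}\rangle$, the Cartesian decomposition gives $a_{n}^{2}+b_{n}^{2}=|\langle Ax_{n},x_{n}\rangle|^{2}\leq w^{2}(A)$, while by construction $(a_{n}+b_{n})^{2}\to 2w^{2}(A)$. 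The identity $2a_{n}b_{n}=(a_{n}+b_{n})^{2}-(a_{n}^{2}+b_{n}^{2})$ together with the bound $a_{n}^{2}+b_{n}^{2}\leq w^{2}(A)$ produces the sandwich
\[
(a_{n}+b_{n})^{2}-w^{2}(A)\leq 2a_{n}b_{n}\leq a_{n}^{2}+b_{n}^{2}\leq w^{2}(A),
\]
whose two ends tend to $w^{2}(A)$; hence $2a_{n}b_{n}\to w^{2}(A)$, and therefore $a_{n}^{2}+b_{n}^{2}\to w^{2}(A)$ and $(a_{n}-b_{n})^{2}\to 0$. Boundedness of $a_{n}+b_{n}$ then forces $a_{n}^{2}-b_{n}^{2}=(a_{n}-b_{n})(a_{n}+b_{n})\to 0$, so $a_{n}^{2},b_{n}^{2}\to\tfrac{1}{2}w^{2}(A)$, giving $\lim_{n}|a_{n}|=\lim_{n}|b_{n}|=w(A)/\sqrt{2}$.

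The main obstacle is to choose the right approximating sequence in (i): a sequence merely realizing $w(A)$ along $A$ itself could, a priori, put all of its weight into one of $\Re(A)x_{n}$ or $\Im(A)x_{n}$, and no equality of $|a_{n}|$ and $|b_{n}|$ would follow. The resolution is to approximate along the rotated self-adjoint operator $\Re(A)+\Im(A)$, whose norm (thanks to (ii)) is exactly $\sqrt{2}\,w(A)$; combined with the universal upper bound $a_{n}^{2}+b_{n}^{2}\leq w^{2}(A)$, this is precisely what forces the cross term $a_{n}b_{n}$ to saturate, thereby equalizing $|a_{n}|$ and $|b_{n}|$ in the limit.
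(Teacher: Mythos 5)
Your proof is correct, and both halves take a genuinely different route from the paper's. For (ii), the paper first deduces $\|\Re(A)+\Im(A)\|=\|\Re(A)-\Im(A)\|$ from the sharper lower bound recorded in Remark \ref{remi}(i) (which in turn uses that the Crawford numbers $c(\Re(A)\pm\Im(A))$ vanish) and then invokes Theorem \ref{thp} to identify the common value; you bypass Remark \ref{remi} entirely: equality across the chain in Theorem \ref{th1} gives $\|\Re(A)+\Im(A)\|^{2}+\|\Re(A)-\Im(A)\|^{2}=4w^{2}(A)$, and the individual bounds \eqref{eq2p} and \eqref{eq3p} then force each summand to equal $2w^{2}(A)=\tfrac12\|A^*A+AA^*\|$. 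For (i), the paper argues through the Crawford numbers: equality in Theorem \ref{th1} forces $c(\Re(A)+\Im(A))=c(\Re(A)-\Im(A))=0$, and along a sequence with $\langle(\Re(A)+\Im(A))y_n,y_n\rangle\to 0$ one has $\bigl|\,|\langle\Re(A)y_n,y_n\rangle|-|\langle\Im(A)y_n,y_n\rangle|\,\bigr|\to 0$ (one still passes to a bounded subsequence so that the two limits exist). You instead take a norming sequence for the self-adjoint operator $\Re(A)+\Im(A)$, whose norm is pinned down by (ii), and squeeze the cross term $a_nb_n$; this is slightly longer but delivers more: the limits exist without extracting subsequences and are identified as $w(A)/\sqrt{2}$. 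Both arguments are sound.
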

\begin{proof}
	Let $w^2(A)=\frac{1}{4}\|A^*A+AA^*\|$. It follows from Theorem \ref{th1} that $c(\Re(A)+\Im(A))=c(\Re(A)-\Im(A))=0$. This implies that there exist sequences $\{y_n\}$ and $\{z_n\}$ in $\mathscr{H}$ with $\|y_n \|=\|z_n \|=1$	 such that $\lim_{n\rightarrow \infty }\langle (\Re(A)+\Im(A))y_n,y_n\rangle=0$ and $\lim_{n\rightarrow \infty }\langle (\Re(A)-\Im(A))z_n,z_n \rangle=0$. Thus (i) holds.
	
Also, from (i) of Remark \ref{remi}, we have $\|\Re(A)+\Im(A)\|^2=\|\Re(A)-\Im(A)\|^2$. In addition, we  conclude from Theorem \ref{thp}   that $\|\Re(A)+\Im(A)\|^2=\frac{1}{2}\|A^*A+AA^*\|$, which yields (ii).
\end{proof}

Considering the matrix $A=\left(\begin{matrix}
1&0\\
0& \rm i
\end{matrix}\right)$, we conclude that the converse of Corollary \ref{corp2}, is not true, in general.
\bigskip
%.........................................\\

%$\textbf{Incomparability of bounds in Theorems \ref{thn16}, \ref{theor16} and \ref{theor17}.}$

\begin{remark}
Considering the following two examples, we observe that the bounds obtained in Theorems \ref{theor16} and \ref{theor17} (also, Theorems \ref{thn16} and \ref{theor17}) are not comparable, in general. \\
(i) Let $ A=\begin{pmatrix}
2+2i & 0 \\
0 & 0
\end{pmatrix}$. Then $ \Re(A)=\begin{pmatrix}
2 & 0 \\
0 & 0
\end{pmatrix}$ and $ \Im(A)=\begin{pmatrix}
2 & 0 \\
0 & 0
\end{pmatrix}$. Clearly, $ \|\Re(A)+\Im(A)\|=4$ and $ \|\Re(A)-\Im(A)\|=0$. By simple calculations, we have 
\begin{align*}
& \frac{1}{2\sqrt{2}}\left(\| \Re(A)+\Im(A)\|^4+\| \Re(A)-\Im(A) \|^4\right)^{\frac{1}{2}}=4\sqrt{2}\approx5.65685424949,\\
&\frac{1}{4}\bigg\{ \frac{3}{2} \|\Re(A)+\Im(A)\|^4+\frac{3}{2}\|\Re(A)-\Im(A)\|^4
+ \|\Re(A)+\Im(A)\|^2 \|\Re(A)-\Im(A)\|^2\bigg\}^{\frac{1}{2}}\\
&=2\sqrt{6}\approx4.89897948557,\\
&\frac{1}{4}\left [ \max\left\lbrace\|\Re(A)+\Im(A)\|^2,\| \Re(A)-\Im(A)\|^2\right\rbrace + \|\Re(A)+\Im(A)\|\|\Re(A)-\Im(A)\|\right]=4.
\end{align*}
(ii) Let $ A=\begin{pmatrix}
3+2i & 0 \\
0 & 4i
\end{pmatrix}$. Then $ \Re(A)=\begin{pmatrix}
3 & 0 \\
0 & 0
\end{pmatrix}$ and $ \Im(A)=\begin{pmatrix}
2 & 0 \\
0 & 4
\end{pmatrix}$. Therefore, $ \|\Re(A)+\Im(A)\|=5$ and $ \|\Re(A)-\Im(A)\|=4$. By simple calculations, we get 
\begin{align*}
& \frac{1}{2\sqrt{2}}\left(\| \Re(A)+\Im(A)\|^4+\| \Re(A)-\Im(A) \|^4\right)^{\frac{1}{2}}=\frac{1}{2\sqrt{2}}\sqrt{881}\approx10.4940459309,\\
& \frac{1}{4}\bigg\{ \frac{3}{2} \|\Re(A)+\Im(A)\|^4+\frac{3}{2}\|\Re(A)-\Im(A)\|^4
+ \|\Re(A)+\Im(A)\|^2 \|\Re(A)-\Im(A)\|^2\bigg\}^{\frac{1}{2}}\\
&=\frac{1}{4}\sqrt{\frac{17215}{10}}\approx10.37274071,\\
& \frac{1}{4}\left [ \max\left\lbrace\|\Re(A)+\Im(A)\|^2,\| \Re(A)-\Im(A)\|^2\right\rbrace + \|\Re(A)+\Im(A)\|\|\Re(A)-\Im(A)\|\right]\\
&=\frac{45}{4}=11.25.
\end{align*}

\end{remark}

%..............................................

Now, we obtain an upper bound for the numerical radius of bounded linear operators. The following inequality is known as the Buzano inequality.

\begin{lemma}$($\cite{S}$)$ 
	Let $ x,y,e\in \mathscr{H}$ with $ \|e\|=1$. Then
	$$ \mid\langle x,e\rangle\langle e,y\rangle\mid\leq\frac{1}{2}\left(\mid\langle x,y\rangle \mid +\|x\|\|y\|\right).$$\label{lem11}\end{lemma}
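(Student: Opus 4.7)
The plan is to recast the product $\langle x,e\rangle\langle e,y\rangle$ as a single inner product involving the rank-one orthogonal projection $P$ onto the span of $e$, and then exploit the fact that $P-\tfrac{1}{2}I$ has operator norm exactly $\tfrac{1}{2}$.

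More precisely, I would define $P$ by $Pz=\langle z,e\rangle e$ for every $z\in\mathscr{H}$. Since $\|e\|=1$, $P$ is an orthogonal projection of rank one, and one checks directly that
$$\langle Px,y\rangle=\langle x,e\rangle\langle e,y\rangle,$$
so the left-hand side of the claimed inequality is precisely $|\langle Px,y\rangle|$.

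Next, I would observe that $T:=P-\tfrac{1}{2}I$ is self-adjoint with spectrum contained in $\{-\tfrac{1}{2},\tfrac{1}{2}\}$ (the nonzero eigenvalue of $P$ is $1$ and the rest of its spectrum is $0$), so $\|T\|=\tfrac{1}{2}$. Applying the elementary bound $|\langle Tx,y\rangle|\le\|T\|\,\|x\|\,\|y\|$ and expanding the left-hand side yields
$$\left|\langle Px,y\rangle-\tfrac{1}{2}\langle x,y\rangle\right|\le\tfrac{1}{2}\|x\|\,\|y\|,$$
whereupon the triangle inequality gives
$$|\langle x,e\rangle\langle e,y\rangle|=|\langle Px,y\rangle|\le\tfrac{1}{2}|\langle x,y\rangle|+\tfrac{1}{2}\|x\|\,\|y\|,$$
which is exactly the stated inequality.

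There is no serious obstacle here; the only observation worth flagging is the spectral fact that centering the rank-one projection $P$ about $\tfrac{1}{2}I$ symmetrizes its spectrum about the origin and therefore halves its operator norm. The entire content of Buzano's inequality is compressed into that single remark, which also explains why the bound is sharp (for instance, taking $x=y=e$ makes both sides equal to $1$).
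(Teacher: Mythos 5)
Your argument is correct, and since the paper states this lemma as a quoted result from Buzano's original article \cite{S} without reproducing a proof, there is nothing in the text to compare it against line by line. Your route --- writing the left-hand side as $|\langle Px,y\rangle|$ for the rank-one orthogonal projection $P$ onto $\operatorname{span}\{e\}$, noting that $T=P-\tfrac{1}{2}I$ is self-adjoint with $\|T\|=\tfrac{1}{2}$, and then splitting $\langle Px,y\rangle=\langle Tx,y\rangle+\tfrac{1}{2}\langle x,y\rangle$ --- is a clean and complete proof. Every step checks out: $\langle Px,y\rangle=\langle x,e\rangle\langle e,y\rangle$ is immediate from $Px=\langle x,e\rangle e$; the norm computation can be made even more self-contained by observing $T^2=P^2-P+\tfrac{1}{4}I=\tfrac{1}{4}I$, so $\|T\|^2=\|T^2\|=\tfrac{1}{4}$ without invoking the spectrum (this also covers the degenerate case $\dim\mathscr{H}=1$, where $P=I$); and the final estimate is Cauchy--Schwarz plus the triangle inequality. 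Your closing remark about sharpness at $x=y=e$ is also right. This ``center the projection at $\tfrac{1}{2}I$'' proof is one of the standard short derivations of Buzano's inequality and is arguably more transparent than Buzano's original argument, so it would serve perfectly well if the lemma needed to be made self-contained here.
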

\begin{theorem}
	Let $ A\in\mathbb{B}(\mathscr{H})$. Then $$ w^2(A)\leq\frac{1}{2}\left[\|A\|^2 \left (\min_{t\in[0,1]}\|t A^*A+(1-t)AA^*\| \right )+w^2(A^2)+w(A^2) \|A^*A+AA^*\|\right]^{\frac{1}{2}}.$$
	\label{th13}\end{theorem}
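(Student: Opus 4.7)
The plan is to extract a fourth-power estimate on $|\langle Ax,x\rangle|$ from the Buzano inequality and then split the resulting expression so that the parameter $t \in [0,1]$ appears naturally in the last term.

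First I would apply Lemma \ref{lem11} with the unit vector $e=x$, and with the substitutions $x\mapsto Ax$, $y\mapsto A^*x$. Using $\langle x, A^*x\rangle = \langle Ax,x\rangle$ and $\langle Ax, A^*x\rangle = \langle A^2 x, x\rangle$, this immediately yields the standard consequence of Buzano,
\begin{equation*}
|\langle Ax,x\rangle|^2 \leq \tfrac{1}{2}\bigl(|\langle A^2 x, x\rangle| + \|Ax\|\|A^*x\|\bigr).
\end{equation*}
Squaring both sides produces three terms: $|\langle A^2 x, x\rangle|^2$, the cross term $2|\langle A^2 x, x\rangle|\|Ax\|\|A^*x\|$, and $\|Ax\|^2\|A^*x\|^2$.

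Next I would bound each term. For the cross term, AM--GM gives $2\|Ax\|\|A^*x\|\leq \|Ax\|^2+\|A^*x\|^2 = \langle(A^*A+AA^*)x,x\rangle$, so it is dominated by $|\langle A^2x,x\rangle|\,\langle(A^*A+AA^*)x,x\rangle$. For the quadratic factor, write $\|Ax\|^2\|A^*x\|^2 = \langle A^*Ax,x\rangle\langle AA^*x,x\rangle$; the key observation is that each of the two nonnegative factors is bounded by $\|A\|^2$, so for every $t \in [0,1]$,
\begin{equation*}
\langle A^*Ax, x\rangle\,\langle AA^*x, x\rangle \leq \|A\|^{2}\min\{\langle A^*Ax,x\rangle,\langle AA^*x,x\rangle\}\leq \|A\|^{2}\langle(tA^*A+(1-t)AA^*)x,x\rangle,
\end{equation*}
since the minimum of two reals is dominated by any convex combination of them. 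This single inequality is the engine that lets the $\min_{t\in[0,1]}$ enter the final bound, and it is the only nontrivial step in the argument.

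Putting the three bounds together, for every unit vector $x$ and every $t \in [0,1]$,
\begin{equation*}
|\langle Ax,x\rangle|^{4} \leq \tfrac{1}{4}\bigl(|\langle A^{2}x,x\rangle|^{2} + |\langle A^{2}x,x\rangle|\,\langle(A^*A+AA^*)x,x\rangle + \|A\|^{2}\,\langle(tA^*A+(1-t)AA^*)x,x\rangle\bigr).
\end{equation*}
Taking the supremum over $\|x\|=1$ and using that $A^*A+AA^*$ and $tA^*A+(1-t)AA^*$ are positive (so their operator norms coincide with their numerical radii), the three terms become $w^{2}(A^{2})$, $w(A^{2})\|A^*A+AA^*\|$, and $\|A\|^{2}\|tA^*A+(1-t)AA^*\|$. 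Minimizing over $t$ and then extracting a square root yields the stated estimate. I do not anticipate any genuine obstacle beyond the convex-combination trick already highlighted.
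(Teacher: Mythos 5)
Your proposal is correct and follows essentially the same route as the paper: Buzano's inequality applied to $\langle Ax,x\rangle\langle x,A^*x\rangle$, squaring, bounding the cross term via $2\|Ax\|\|A^*x\|\leq\langle(A^*A+AA^*)x,x\rangle$, and letting the parameter $t$ enter through the product $\langle A^*Ax,x\rangle\langle AA^*x,x\rangle$ before taking the supremum and minimizing over $t$. The only difference is your treatment of that product term --- you use $ab\leq\|A\|^{2}\min\{a,b\}\leq\|A\|^{2}\bigl(ta+(1-t)b\bigr)$, whereas the paper writes $ab=a^{t}b^{1-t}\cdot a^{1-t}b^{t}$ and applies weighted AM--GM to each factor before bounding one of them by $\|A\|^{2}$; both yield the identical final term $\|A\|^{2}\,\|tA^*A+(1-t)AA^*\|$, and your justification is arguably the more elementary of the two.
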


\begin{proof}
	Let $x\in\mathscr{H}$ with $\|x\|=1$. Then  
	\begin{align*}&\mid\langle Ax,x \rangle\mid^2\\&=\mid\langle Ax,x \rangle\langle x,A^*x \rangle\mid\\&\leq\frac{1}{2}\left( \mid \langle Ax,A^*x\rangle \mid +\|Ax\|\|A^*x\|\right) \,\,\qquad\qquad\qquad\qquad(\text{using Lemma \ref{lem11}})\\&=\frac{1}{2}\left\lbrace | \langle Ax,A^*x\rangle | ^2+\|Ax\|^2\|A^*x\|^2+2 | \langle Ax,A^*x\rangle | \|Ax\|\|A^*x\|\right\rbrace^{\frac{1}{2}}\\&\leq\frac{1}{2}\left\lbrace | \langle A^2x,x\rangle | ^2+\langle A^*Ax,x \rangle\langle AA^*x,x\rangle +|\langle A^2x,x\rangle | \langle (AA^*+ A^*A)x,x \rangle\right\rbrace^{\frac{1}{2}}\\&=\frac{1}{2}\Bigg \lbrace |\langle A^2x,x\rangle|^2+\langle A^*Ax,x \rangle^t\langle AA^*x,x\rangle^{1-t} \langle A^*Ax,x \rangle^{1-t}\langle AA^*x,x\rangle^t\\
		&\quad + |\langle A^2x,x\rangle | \langle (AA^*+ A^*A)x,x \rangle\Bigg \rbrace^{\frac{1}{2}}\\&\leq\frac{1}{2}\Bigg \lbrace |\langle A^2x,x\rangle|^2+\langle (tA^*A+(1-t) AA^*)x,x\rangle \langle ((1-t)A^*A+tAA^*)x,x\rangle\\
		&\quad + |\langle A^2x,x\rangle|\langle (AA^*+ A^*A)x,x \rangle\Bigg \rbrace^{\frac{1}{2}}\,\qquad \qquad\qquad\quad\text{(by the McCarthy inequality)}\\
		&\leq\frac{1}{2}\Bigg \lbrace |\langle A^2x,x\rangle|^2+\| tA^*A+(1-t) AA^*\| \| (1-t)A^*A+tAA^*\|\\
		& +|\langle A^2x,x\rangle | \langle (AA^*+ A^*A)x,x \rangle\Bigg \rbrace^{\frac{1}{2}}\\&\leq\frac{1}{2}\Bigg\lbrace |\langle A^2x,x\rangle|^2+\| tA^*A+(1-t) AA^*\| \|A\|^2+|\langle A^2x,x\rangle|\langle (AA^*+ A^*A)x,x \rangle\Bigg \rbrace^{\frac{1}{2}}\\&\leq\frac{1}{2}\Bigg \lbrace w^2(A^2)+\| tA^*A+(1-t) AA^*\| \|A\|^2+w( A^2)\| AA^*+ A^*A \|\Bigg \rbrace^{\frac{1}{2}}.
	\end{align*}
	Taking supremum over $\|x\|=1$, we get
	$$ w^2(A)\leq\frac{1}{2}\left[\|A\|^2 \left (\|t A^*A+(1-t)AA^*\| \right )+w^2(A^2)+w(A^2) \|A^*A+AA^*\|\right]^{\frac{1}{2}}.$$
	This holds for all $t\in [0,1]$, so considering minimum over $t\in [0,1]$, we have
	$$ w^2(A)\leq\frac{1}{2}\left[\|A\|^2 \left (\min_{t\in[0,1]}\|t A^*A+(1-t)AA^*\| \right )+w^2(A^2)+w(A^2) \|A^*A+AA^*\|\right]^{\frac{1}{2}},$$ as required.
\end{proof}

\begin{remark}
(i)	Dragomir \cite[Th. 1]{aaa} proved that for $A\in \mathbb{B}(\mathscr{H})$,
\begin{eqnarray}\label{d1}
w^2(A) \leq \frac{1}{2}\Big(\|A\|^2+w(A^2)\Big).
\end{eqnarray} 
		
	We would like to remark that the inequality in Theorem \ref{th13} is sharper than that in Dragomir's result \cite[Th. 1]{aaa}.\\
(ii)	Now, we consider $A=\left(\begin{matrix}
	0&1&0\\
	0&0&1\\
	0&0&0
	\end{matrix}\right)$. Then, by simple calculations, we arrive at 
	$$\frac{1}{2}\left[\|A\|^2 \left (\min_{t\in[0,1]}\|t A^*A+(1-t)AA^*\| \right )+w^2(A^2)+w(A^2) \|A^*A+AA^*\|\right]^{\frac{1}{2}}=\frac{3}{4}$$
	and $$\frac{1}{2}\|A^*A+AA^*\|=1.$$
Again,	considering another example $A=\left(\begin{matrix}
	0&2&0\\
	0&0&0\\
	0&0&\sqrt{2}
	\end{matrix}\right)$, we have 
	$$\frac{1}{2}\left[\|A\|^2 \left (\min_{t\in[0,1]}\|t A^*A+(1-t)AA^*\| \right )+w^2(A^2)+w(A^2) \|A^*A+AA^*\|\right]^{\frac{1}{2}}=\sqrt{5}$$
	and $$\frac{1}{2}\|A^*A+AA^*\|=2.$$
	Thus, we conclude that the second inequality in (\ref{d}) and our obtained inequality in Theorem \ref{th13} are not comparable, in general.\\
	
\end{remark}

%........................................\\

In the following theorem, we present another refinement of (\ref{d1}). 

\begin{theorem}
	Let $ A\in\mathbb{B}(\mathscr{H})$. Then $$ w^4(A)\leq\frac{1}{4}\left[ w^2(A^2)+\frac{1}{4} \left \|(A^*A)^2+(AA^*)^2 \right \| +\frac{1}{2}w(A^*A^2A^*) +w(A^2) \|A^*A+AA^*\|\right].$$
	\label{th14}\end{theorem}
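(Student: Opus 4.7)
The plan is to mirror the opening of the proof of Theorem~\ref{th13}. For a unit vector $x\in\mathscr{H}$, writing $|\langle Ax,x\rangle|^2=|\langle Ax,x\rangle\langle x,A^*x\rangle|$ and applying the Buzano inequality (Lemma~\ref{lem11}) with $e=x$, one obtains
$$|\langle Ax,x\rangle|^2\leq \tfrac{1}{2}\bigl(|\langle A^2x,x\rangle|+\|Ax\|\|A^*x\|\bigr).$$
Instead of bounding further as in Theorem~\ref{th13}, I would \emph{square} both sides and expand, arriving at
$$|\langle Ax,x\rangle|^4\leq \tfrac{1}{4}\bigl(|\langle A^2x,x\rangle|^2+2|\langle A^2x,x\rangle|\,\|Ax\|\|A^*x\|+\|Ax\|^2\|A^*x\|^2\bigr),$$
and then control each of the three resulting terms separately.

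The first term is already in the desired form, since $|\langle A^2x,x\rangle|^2\leq w^2(A^2)$ after taking supremum. For the middle term I would apply the AM-GM inequality $2\|Ax\|\|A^*x\|\leq\|Ax\|^2+\|A^*x\|^2=\langle(A^*A+AA^*)x,x\rangle$, so that
$$2|\langle A^2x,x\rangle|\,\|Ax\|\|A^*x\|\leq |\langle A^2x,x\rangle|\,\langle(A^*A+AA^*)x,x\rangle,$$
which contributes $w(A^2)\|A^*A+AA^*\|$ after supremum.

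The crucial step is bounding the third term $\|Ax\|^2\|A^*x\|^2=\langle A^*Ax,x\rangle\langle AA^*x,x\rangle$. Here I plan to use the elementary identity $ab=\tfrac{1}{4}((a+b)^2-(a-b)^2)\leq \tfrac{1}{4}(a+b)^2$ to get
$$\|Ax\|^2\|A^*x\|^2\leq \tfrac{1}{4}\langle(A^*A+AA^*)x,x\rangle^2,$$
then invoke the Cauchy-Schwarz inequality in the form $\langle Tx,x\rangle^2\leq\langle T^2x,x\rangle$, valid for self-adjoint $T$ and $\|x\|=1$, applied to $T=A^*A+AA^*$. The key algebraic observation is the decomposition
$$(A^*A+AA^*)^2=(A^*A)^2+(AA^*)^2+A^*A^2A^*+A(A^*)^2A=(A^*A)^2+(AA^*)^2+2\Re(A^*A^2A^*),$$
so using the standard bound $\langle\Re(T)x,x\rangle\leq w(T)$ for $\|x\|=1$ yields
$$\|Ax\|^2\|A^*x\|^2\leq \tfrac{1}{4}\|(A^*A)^2+(AA^*)^2\|+\tfrac{1}{2}w(A^*A^2A^*).$$

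Collecting the three bounds, multiplying by $\tfrac{1}{4}$, and taking the supremum over $\|x\|=1$ produces the stated inequality. The main obstacle is spotting this precise chain of identities for the $\|Ax\|^2\|A^*x\|^2$ term, which yields \emph{exactly} the two quantities $\tfrac{1}{4}\|(A^*A)^2+(AA^*)^2\|$ and $\tfrac{1}{2}w(A^*A^2A^*)$ appearing in the statement; the remaining steps are routine applications of tools already used in the paper.
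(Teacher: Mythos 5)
Your argument is correct, and its overall skeleton coincides with the paper's: apply Buzano's inequality to get $|\langle Ax,x\rangle|^2\leq\frac{1}{2}(|\langle A^2x,x\rangle|+\|Ax\|\,\|A^*x\|)$, square, and bound the three resulting terms, with the first and second terms handled exactly as in the paper. The only genuine divergence is in the third term $\langle A^*Ax,x\rangle\langle AA^*x,x\rangle$. The paper applies Buzano a \emph{second} time, to the vectors $A^*Ax$ and $AA^*x$ with $e=x$, obtaining $\frac{1}{2}\bigl(\|A^*Ax\|\,\|AA^*x\|+|\langle A^*A^2A^*x,x\rangle|\bigr)$ and then AM--GM; you instead use AM--GM first, then the Cauchy--Schwarz estimate $\langle Tx,x\rangle^2\leq\langle T^2x,x\rangle$ for $T=A^*A+AA^*$, and expand $(A^*A+AA^*)^2=(A^*A)^2+(AA^*)^2+2\Re(A^*A^2A^*)$. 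The two routes land on the same quantity (your intermediate bound carries $\Re\langle A^*A^2A^*x,x\rangle$ where the paper has $|\langle A^*A^2A^*x,x\rangle|$, a marginally tighter but ultimately equivalent estimate once $w(\cdot)$ is invoked); yours is arguably more elementary in that it avoids a second use of Buzano, at the cost of having to spot the algebraic identity for $(A^*A+AA^*)^2$, while the paper's version makes the appearance of $A^*A^2A^*$ automatic from the inner product $\langle AA^*x,A^*Ax\rangle$.
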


\begin{proof}
Let $x\in \mathscr{H}$ with $\|x\|=1$. It follows from Lemma \ref{lem11} that
\begin{eqnarray*}
	\langle A^*Ax,x \rangle\langle AA^*x,x\rangle
&=&\langle A^*Ax,x \rangle\langle x,AA^*x\rangle\\
	&\leq& \frac{\|A^*Ax\| \|AA^*x\|+ | \langle AA^*x,A^*Ax\rangle| }{2}\\
	&\leq& \frac{1}{4}\left(\|A^*Ax\|^2+ \|AA^*x\|^2 \right)+\frac{1}{2} | \langle A^*A^2A^*x,x\rangle|\\
	&=& \frac{1}{4}\left \langle \left ( (A^*A)^2 +(AA^*)^2 \right)x,x \right \rangle +\frac{1}{2} | \langle A^*A^2A^*x,x\rangle|\\
	&\leq& \frac{1}{4} \left \| (A^*A)^2 +(AA^*)^2 \right \| +\frac{1}{2}w( A^*A^2A^*).
\end{eqnarray*}
Following the proof of Theorem \ref{th13}, we infer that
	\begin{align*}
		 \mid &\langle Ax,x \rangle\mid^4\\
		&\leq\frac{1}{4}\left\lbrace | \langle A^2x,x\rangle | ^2+\langle A^*Ax,x \rangle\langle AA^*x,x\rangle +|\langle A^2x,x\rangle | \langle (AA^*+ A^*A)x,x \rangle\right\rbrace\\
		&\leq \frac{1}{4} \left \lbrace w^2(A^2)+\langle A^*Ax,x \rangle\langle AA^*x,x\rangle+ w(A^2)\|AA^*+ A^*A\| \right \rbrace\\
			&\leq \frac{1}{4} \left \lbrace w^2(A^2)+ \frac{1}{4} \left \| (A^*A)^2 +(AA^*)^2 \right \| +\frac{1}{2}w( A^*A^2A^*)+ w(A^2)\|AA^*+ A^*A\| \right \rbrace.
	\end{align*}
	Considering supremum over $\|x\|=1$, we arrive at the desired inequality.
\end{proof}

\begin{remark}
Clearly, for $A\in \mathbb{B}(\mathscr{H})$,	we have
\begin{align*}
\frac{1}{4}&\left[ w^2(A^2)+\frac{1}{4} \left \|(A^*A)^2+(AA^*)^2 \right \| +\frac{1}{2}w(A^*A^2A^*) +w(A^2) \|A^*A+AA^*\|\right]\\
&\leq \frac{1}{4}\left[ w^2(A^2)+\frac{1}{2} \left \|A\right \|^4 +\frac{1}{2} \left \| A^*A^2A^* \right \| +2w(A^2) \|A\|^2\right]\\
&\leq \frac{1}{4}\left[ w^2(A^2)+\frac{1}{2} \left \|A\right \|^4 +\frac{1}{2} \left \| A \right \|^4 +2 w(A^2) \|A\|^2\right]\\
&= \left[ \frac{ \|A \|^2+ w(A^2) }{2} \right]^2. 
\end{align*}
Therefore, Theorem \ref{th14} refines inequality (\ref{d1}).
\end{remark}

In our next theorem, we obtain an inequality involving norm and numerical radius of a bounded linear operator. First we recall the following well-known identity from \cite[p. 85]{YAM1}:
\begin{eqnarray}\label{ya}
w(A)=\sup_{\theta\in \mathbb{R} } \left \|\Re(e^{\rm i \theta}A) \right \|.
\end{eqnarray}

\begin{theorem}\label{thp1}
Let $A\in \mathbb{B}(\mathscr{H})$. Then
\[w^3(A)\leq \frac{1}{4} \Big[ w(A^3)+\|A\| \|A^2\|+w(A) \|A^*A+AA^*\| \Big].\]
\end{theorem}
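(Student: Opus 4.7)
The plan is to imitate the strategy used to prove Theorem \ref{th13}, but iterate the Buzano inequality (Lemma \ref{lem11}) twice so that a factor of $|\langle Ax,x\rangle|$ survives, producing the cubic power on the left. Throughout, I fix a unit vector $x\in\mathscr H$ and rely on the identity $\langle x, A^*x\rangle=\langle Ax,x\rangle$ (so that $|\langle Ax,x\rangle|^{2}=|\langle Ax,x\rangle\langle x,A^{*}x\rangle|$) in order to put the scalar factors in the form required by Buzano.

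First I would apply Lemma \ref{lem11} with $e=x$, and the roles of $x,y$ played by $Ax$ and $A^{*}x$, respectively, obtaining
\[
|\langle Ax,x\rangle|^{2}\le \tfrac{1}{2}\bigl(|\langle A^{2}x,x\rangle|+\|Ax\|\,\|A^{*}x\|\bigr).
\]
Multiplying both sides by $|\langle Ax,x\rangle|$ produces two summands on the right. On the first summand, $|\langle A^{2}x,x\rangle|\,|\langle Ax,x\rangle|=|\langle A^{2}x,x\rangle\langle x,A^{*}x\rangle|$, so a second application of Buzano (with the same $e=x$ and with $A^{2}x,\,A^{*}x$ in place of $x,y$) gives
\[
|\langle A^{2}x,x\rangle|\,|\langle Ax,x\rangle|\le \tfrac{1}{2}\bigl(|\langle A^{3}x,x\rangle|+\|A^{2}x\|\,\|A^{*}x\|\bigr).
\]
On the second summand, I would use the elementary inequality $\|Ax\|\,\|A^{*}x\|\le\tfrac{1}{2}(\|Ax\|^{2}+\|A^{*}x\|^{2})=\tfrac{1}{2}\langle(A^{*}A+AA^{*})x,x\rangle$, retaining the extra factor $|\langle Ax,x\rangle|$.

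Putting the two estimates together yields
\[
|\langle Ax,x\rangle|^{3}\le \tfrac{1}{4}|\langle A^{3}x,x\rangle|+\tfrac{1}{4}\|A^{2}x\|\,\|A^{*}x\|+\tfrac{1}{4}\langle(A^{*}A+AA^{*})x,x\rangle\,|\langle Ax,x\rangle|.
\]
Now take the supremum over all unit vectors $x$: the first term is bounded by $\tfrac{1}{4}w(A^{3})$; the second by $\tfrac{1}{4}\|A^{2}\|\,\|A\|$ (using $\|A^{*}\|=\|A\|$); and for the third, since pointwise $\langle(A^{*}A+AA^{*})x,x\rangle\le\|A^{*}A+AA^{*}\|$ and $|\langle Ax,x\rangle|\le w(A)$, the product of the two factors is bounded by $\|A^{*}A+AA^{*}\|\,w(A)$ at every unit $x$. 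The required inequality follows.

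The proof is mostly a careful bookkeeping exercise; the only delicate point is ensuring the right pairing for the second use of Buzano. The temptation is to try to pull out $w(A^{3})$ by applying Buzano to $|\langle Ax,x\rangle|^{3}$ directly, which does not balance the factors correctly; instead one has to split off one factor of $|\langle Ax,x\rangle|$ first, and then recognize $\langle A^{2}x,x\rangle\langle x,A^{*}x\rangle$ as the correct Buzano input so that $\langle A^{2}x,A^{*}x\rangle=\langle A^{3}x,x\rangle$ appears. Once that pairing is identified, the rest is routine.
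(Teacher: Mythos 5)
Your argument is correct, and it is a genuinely different proof from the one in the paper. The paper establishes the operator identity $\Re^3(A)=\frac{1}{4}\Re(A^3)+\frac{1}{8}(A^2A^*+{A^*}^2A)+\frac{1}{4}(A^*A+AA^*)\Re(A)$, bounds $\|\Re(A)\|^3$ by the triangle inequality (using $\frac18\|A^2A^*+{A^*}^2A\|\le\frac14\|A^2\|\|A\|$), replaces $A$ by $e^{\mathrm{i}\theta}A$, and then takes the supremum over $\theta$ via Yamazaki's identity $w(A)=\sup_{\theta}\|\Re(e^{\mathrm{i}\theta}A)\|$; the three terms on the right are invariant under this rotation up to the replacement of $\|\Re(\cdot)\|$ by $w(\cdot)$, which is what produces $w(A^3)$ and $w(A)$ in the bound. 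You instead work pointwise at a unit vector and iterate Buzano's inequality (Lemma \ref{lem11}): the pairings $\langle Ax,x\rangle\langle x,A^*x\rangle$ and $\langle A^2x,x\rangle\langle x,A^*x\rangle$ are chosen correctly, so $\langle Ax,A^*x\rangle=\langle A^2x,x\rangle$ and $\langle A^2x,A^*x\rangle=\langle A^3x,x\rangle$ appear as claimed, the AM--GM step gives $\|Ax\|\,\|A^*x\|\le\frac12\langle(A^*A+AA^*)x,x\rangle$, and the final supremum, taken term by term, is legitimate since each summand is bounded uniformly in $x$. Your route is consonant with the proof of Theorem \ref{th13} in this paper and yields the stronger pointwise inequality
\[
|\langle Ax,x\rangle|^{3}\le \tfrac{1}{4}\,|\langle A^{3}x,x\rangle|+\tfrac{1}{4}\,\|A^{2}x\|\,\|A^{*}x\|+\tfrac{1}{4}\,\langle(A^{*}A+AA^{*})x,x\rangle\,|\langle Ax,x\rangle|
\]
as an intermediate product, whereas the paper's identity-based proof is shorter once the cube of the real part has been expanded and extends naturally to the rotation trick. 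Both arrive at exactly the same final bound.
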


\begin{proof}
By a short calculation, we get
\[\Re^3(A)=\frac{1}{4} \Re(A^3)+\frac{1}{8}(A^2A^*+{A^*}^2A)+\frac{1}{4}(A^*A+AA^*)\Re(A).\]
\end{proof}
Since $\Re(A)$ is selfadjoint, we have
\begin{eqnarray*}
\|\Re(A)\|^3&=& \left \|\frac{1}{4} \Re(A^3)+\frac{1}{8}(A^2A^*+{A^*}^2A)+\frac{1}{4}(A^*A+AA^*)\Re(A) \right \|\\
&\leq& \frac{1}{4} \left \| \Re(A^3) \right \| +\frac{1}{8}\left \| A^2A^*+{A^*}^2A \right \|+\frac{1}{4}\| A^*A+AA^*\| \|\Re(A)\|\\
&\leq& \frac{1}{4} \left \| \Re(A^3) \right \| +\frac{1}{4}\left \| A^2 \right \| \|A\|+\frac{1}{4}\| A^*A+AA^*\| \|\Re(A)\|.
\end{eqnarray*}
Now let $\theta\in \mathbb{R}$. Replacing $A$ with $e^{\rm i \theta}A$ in the last inequality yields that
\begin{eqnarray*}
	\|\Re(e^{\rm i \theta}A)\|^3 &\leq& \frac{1}{4} \left \| \Re(e^{3\rm i \theta}A^3) \right \| +\frac{1}{4}\left \| A^2 \right \| \|A\|+\frac{1}{4}\| A^*A+AA^*\| \|\Re(e^{\rm i \theta}A)\|.
\end{eqnarray*}
Taking supremum over all $\theta\in \mathbb{R}$ and using identity (\ref{ya}), we derive that 
\[w^3(A)\leq \frac{1}{4} \Big[ w(A^3)+\|A\| \|A^2\|+w(A) \|A^*A+AA^*\| \Big],\]
as desired.

\begin{remark}
Let $A\in \mathbb{B}(\mathscr{H})$ with $A\neq 0$ and $A^2=0$. It follows from Theorem \ref{thp1} that 
\[w(A)\leq\frac{1}{2} \sqrt{\|A^*A+AA^*\|}. \]
\end{remark}
\noindent This inequality combined with the first inequality in (\ref{k5}) ensures that
\[w(A)=\frac{1}{2}\sqrt{ \|A^*A+AA^*\|}. \] 
It should be mentioned that the reverse part is not true, in general. To see this, consider the matrix $A=\left(\begin{matrix}
0&1&0\\
0&0&1\\
0&0&0
\end{matrix}\right)$. Then one can easily verify that $w(A)=\frac{1}{\sqrt{2}}=\frac{1}{2} \sqrt{\|A^*A+AA^*\|}$, but $A^2\neq 0$.

At the end of the article, we give separate complete characterizations for $w(A)=\frac{1}{2}\|A\|$ and $w(A)=\frac{1}{2}\sqrt{\|A^*A+AA^*\|}$.
First we need the following lemma. Its proof  can be found in \cite[Th. 2.14]{BPaul}.

\begin{lemma}\label{prop12}
	Let $A\in \mathbb{B}(\mathscr{H})$. Then \\
	(i) $w(A)=\frac{1}{2}\|A\|$ if and only if $\overline{W(A)}$ is a circular disk with center at the origin and radius $\frac{1}{2}\|A\|$;\\
	(ii) $w(A)=\frac{1}{2}\sqrt{\|A^*A+AA^*\|}$ if and only if $\overline{W(A)}$ is a circular disk with center at the origin and radius $\frac{1}{2}\sqrt{\|A^*A+AA^*\|}$.
\end{lemma}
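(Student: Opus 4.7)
The ``if'' directions of both (i) and (ii) are immediate: since $w(A)=\sup\{|z|:z\in\overline{W(A)}\}$, whenever $\overline{W(A)}$ is a closed disk of radius $r$ centred at the origin one reads off $w(A)=r$, which equals $\tfrac12\|A\|$ in (i) and $\tfrac12\sqrt{\|A^{*}A+AA^{*}\|}$ in (ii).

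For the \emph{only if} directions, my plan is first to deduce that $\|\Re(e^{i\theta}A)\|=w(A)$ for every $\theta\in\mathbb{R}$, and then to translate this into the asserted disk shape of $\overline{W(A)}$. Identity \eqref{ya} and its counterpart with $\Im$ yield $\|\Re(e^{i\theta}A)\|,\,\|\Im(e^{i\theta}A)\|\le w(A)$ for every $\theta$. In case (i), assuming $w(A)=\tfrac12\|A\|$, the triangle inequality
\[
\|A\|=\|e^{i\theta}A\|\le \|\Re(e^{i\theta}A)\|+\|\Im(e^{i\theta}A)\|\le 2w(A)=\|A\|
\]
forces equality throughout, so $\|\Re(e^{i\theta}A)\|=\tfrac12\|A\|=w(A)$ for every $\theta$. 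In case (ii), assuming $w(A)=\tfrac12\sqrt{\|A^{*}A+AA^{*}\|}$, the rotated analogue of \eqref{eq1} gives $(\Re(e^{i\theta}A))^{2}+(\Im(e^{i\theta}A))^{2}=\tfrac12(A^{*}A+AA^{*})$, and sandwiching
\[
\tfrac12\|A^{*}A+AA^{*}\|\le \|\Re(e^{i\theta}A)\|^{2}+\|\Im(e^{i\theta}A)\|^{2}\le 2w^{2}(A)=\tfrac12\|A^{*}A+AA^{*}\|
\]
again forces $\|\Re(e^{i\theta}A)\|=w(A)$ for every $\theta$.

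For the final step, a compact convex $K\subseteq\mathbb{C}$ equals the closed disk of radius $r$ about $0$ precisely when its support function $s_{K}(\theta):=\sup_{z\in K}\Re(e^{i\theta}z)$ is constantly $r$. Since $\Re(e^{i\theta}A)$ is self-adjoint with $W(\Re(e^{i\theta}A))=\Re(e^{i\theta}W(A))$, the equality $\|\Re(e^{i\theta}A)\|=w(A)$ translates to $\max\bigl(s_{\overline{W(A)}}(\theta),\,s_{\overline{W(A)}}(\theta+\pi)\bigr)=w(A)$ for all $\theta$ together with $s_{\overline{W(A)}}\le w(A)$. To upgrade this one-sided information to $s_{\overline{W(A)}}\equiv w(A)$, I would invoke the convexity of $W(A)$ together with the classical fact that every sharp corner of $\overline{W(A)}$ is an eigenvalue of $A$, as carried out in \cite[Th. 2.14]{BPaul}.

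The main obstacle, as I see it, is exactly this last symmetry step: the norm equalities of the rotated real parts only encode the \emph{width} of $\overline{W(A)}$ in each direction and allow, a priori, asymmetric shapes (such as a half-disk) with the same widths as the desired disk. Ruling them out requires genuinely more structure of the numerical range than pure norm bounds deliver, which is where the real work of \cite[Th. 2.14]{BPaul} sits.
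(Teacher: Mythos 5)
The paper offers no internal proof of this lemma: it is imported wholesale from \cite[Th.~2.14]{BPaul}, so there is no argument of the authors' own to measure yours against. The portions you do carry out are correct. The ``if'' directions are immediate, and both sandwich arguments work: in (i) the chain $\|A\|\le\|\Re(e^{i\theta}A)\|+\|\Im(e^{i\theta}A)\|\le 2w(A)=\|A\|$ (using $\Im(e^{i\theta}A)=\Re(e^{i(\theta-\pi/2)}A)$ and identity (\ref{ya})) forces $\|\Re(e^{i\theta}A)\|=w(A)$ for every $\theta$, and in (ii) the rotated identity $(\Re(e^{i\theta}A))^{2}+(\Im(e^{i\theta}A))^{2}=\tfrac12(A^{*}A+AA^{*})$ does the same. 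You are also right that this is exactly where the easy part ends: constancy of $\|\Re(e^{i\theta}A)\|$ only records $\max\{s(\theta),s(\theta+\pi)\}$ and cannot by itself force a disk --- for instance, a normal operator whose spectrum is the upper unit semicircle has $\|\Re(e^{i\theta}A)\|\equiv 1$ while $\overline{W(A)}$ is a half-disk. Consequently the second arrow of your plan (constancy $\Rightarrow$ disk) is not available, and the citation of \cite[Th.~2.14]{BPaul} must be applied to the original hypotheses $w(A)=\tfrac12\|A\|$ or $w(A)=\tfrac12\sqrt{\|A^{*}A+AA^{*}\|}$ rather than to the derived constancy; this makes your intermediate reduction a correct but ultimately dispensable observation. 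Since you delegate the decisive step to the very same reference the paper cites, your proposal neither falls short of nor genuinely diverges from the paper --- it simply makes explicit some elementary reductions that the paper leaves unstated.
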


\begin{theorem}
	Let $A\in \mathbb{B}(\mathscr{H})$. Then 
	\begin{itemize}
	\item[(i)] $w(A)=\frac{1}{2}\|A\|$ if and only if $w(A+\lambda I)=\frac{1}{2}\|A\|+|\lambda|$ for all $\lambda \in \mathbb{C}$;
	\item[(ii)] $w(A)=\frac{1}{2}\sqrt{\|A^*A+AA^*\|}$ if and only if $w(A+\lambda I)=\frac{1}{2}\sqrt{\|A^*A+AA^*\|}+|\lambda|$ for all $\lambda \in \mathbb{C}$.
	\end{itemize}
\end{theorem}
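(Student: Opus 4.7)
The plan is to reduce everything to Lemma \ref{prop12} via the elementary translation identity $W(A+\lambda I)=W(A)+\lambda$ for every $\lambda\in\mathbb{C}$, which follows directly from $\langle(A+\lambda I)x,x\rangle=\langle Ax,x\rangle+\lambda$ on unit vectors, together with the fact that the numerical radius equals the supremum of the modulus over $\overline{W(\cdot)}$.

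For the forward implication of (i), I would start from $w(A)=\frac{1}{2}\|A\|$, invoke Lemma \ref{prop12}(i) to identify $\overline{W(A)}$ as the closed disk of radius $\frac{1}{2}\|A\|$ centered at the origin, then translate by $\lambda$ to recognize $\overline{W(A+\lambda I)}$ as the closed disk of radius $\frac{1}{2}\|A\|$ centered at $\lambda$. Taking the supremum of $|\cdot|$ over this translated disk yields $w(A+\lambda I)=|\lambda|+\frac{1}{2}\|A\|$. The converse is essentially free: since the identity is asserted for every $\lambda\in\mathbb{C}$, specializing to $\lambda=0$ gives $w(A)=\frac{1}{2}\|A\|$ immediately.

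Part (ii) is handled in exactly the same way, replacing $\frac{1}{2}\|A\|$ throughout by $\frac{1}{2}\sqrt{\|A^*A+AA^*\|}$ and using Lemma \ref{prop12}(ii) in place of (i). The only auxiliary computation to record is $\sup\{|z+\lambda|:|z|\le r\}=|\lambda|+r$, which follows from the triangle inequality with attainment at the boundary point $z=r\lambda/|\lambda|$ (any boundary point if $\lambda=0$). I do not anticipate any real obstacle: the content of the theorem is essentially the observation that additive scalar perturbations preserve the circular-disk characterization from Lemma \ref{prop12} in a quantitatively transparent manner, so the bulk of the work has already been absorbed into that lemma.
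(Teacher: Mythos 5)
Your proposal is correct and follows essentially the same route as the paper: both reduce the statement to Lemma \ref{prop12} via the translation identity $\overline{W(A+\lambda I)}=\overline{W(A)}+\lambda$ and then compute the supremum of the modulus over the translated disk, with the converse obtained trivially at $\lambda=0$. The only difference is that you spell out the elementary computation $\sup\{|z+\lambda|:|z|\le r\}=|\lambda|+r$, which the paper leaves implicit.
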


\begin{proof}
	(i) The sufficient part is trivial,  so we only prove the necessary part. Let $w(A)=\frac{1}{2}\|A\|$. Clearly, $\overline{W(A+\lambda I)}= \overline{W(A)}+\lambda$ for all $\lambda \in \mathbb{C}$. Therefore, it follows from Lemma \ref{prop12}(i) that $\overline{W(A+\lambda I)}$ is a circular disk with center at $\lambda$ and radius $\frac{1}{2}\|A\|$. This implies that $w(A+\lambda I)=\frac{1}{2}\|A\|+|\lambda|$.\\
	(ii) The proof follows as in (i).
\end{proof}

\begin{remark} Let $A\in \mathbb{B}(\mathscr{H})$.
	 We would like to remark that if $\overline{W(A)}$ is a circular disk with center at the origin, then $w(A+\lambda I)=w(A)+|\lambda|$ for all $\lambda \in \mathbb{C}$. Hence, it follows from \cite[Lemma 2.13]{BPaul} that if $\left \| \Re (e^{\rm i \theta}A) \right \|=k\, (\textit{a constant})$ for all $\theta \in \mathbb{R}$, then $w(A+\lambda I)=w(A)+|\lambda|$ for all $\lambda \in \mathbb{C}$. This shows that if $\left \| \Re (e^{\rm i \theta}A) \right \|=k\, (\textit{a constant})$ for all $\theta \in \mathbb{R}$, then $w(A+\lambda I)\geq w(A)$ for all $\lambda \in \mathbb{C}$, that is, $A$ is Birkhoff--James numerical radius orthogonal to $I$, (for the details of Birkhoff--James numerical radius orthogonality, we refer to \cite{MPS, ZAM}). Finally, we remark that if either $w(A)=\frac{1}{2}\|A\|$ or $w(A)=\frac{1}{2}\sqrt{\|A^*A+AA^*\|}$, then $A$ is Birkhoff--James numerical radius orthogonal to $I$.
\end{remark}

\bibliographystyle{amsplain}

\end{document}